\documentclass[
final,nomarks
]{dmtcs-episciences}


\usepackage[utf8]{inputenc}
\usepackage{amsmath,amssymb,amsfonts,amsthm}
\usepackage{subfigure}

%

\usepackage[round]{natbib}

\newcommand{\B}{{\mathcal B}}
\newcommand{\G}{{\mathcal G}}
\newcommand{\Set}{{\mathcal S}}
\newcommand{\K}{{\mathcal K}}
\newcommand{\R}{{\mathcal R}}
\newcommand{\Ham}{{\mathcal H}}
\newcommand{\whp}{with high probability}
\newcommand{\V}{\mathcal{V}}
\newcommand{\W}{\mathcal{W}}
\newcommand{\N}{{\mathcal N}}
\newcommand{\Po}[1]{\textrm{Po}\left(#1\right)}
\newcommand{\Bin}[2]{\textrm{Bin}\left(#1,#2\right)}
\newcommand{\E}{\mathbb{E}}
\newcommand{\Pra}[1]{\Pr\left\{#1\right\}}
\newcommand{\PraG}[1]{\text{Pr}_{\G}\left\{#1\right\}}
\newcommand{\Bnmp}{\mathcal{B}\left(n,m,p\right)}
\newcommand{\Xnmp}{X_q\left(n,m,p\right)}
\newcommand{\END}[1]{\text{END}(#1)}
\newcommand{\LARGEv}{\text{LARGE}}
\newcommand{\SMALLv}{\text{SMALL}}
\newcommand{\eps}{\varepsilon}
\newcommand{\dO}{d_0}
\newcommand{\djeden}{d_1}

\theoremstyle{plain}
\newtheorem{thm}{Theorem}

\newtheorem{lem}{Lemma}

\theoremstyle{definition}
\newtheorem{rem}{Remark}

\author{Katarzyna Rybarczyk
	\thanks{supported by NCN (Narodowe Centrum Nauki) grant 2014/13/D/ST1/01175}}
\title{Finding Hamilton cycles in random intersection graphs}
\affiliation{
  Adam Mickiewicz University, Pozna\'n, Poland}

\keywords{random intersection graphs, Hamilton cycle, algorithm}
\received{2017-2-15}

\revised{2017-10-5}

\accepted{2018-1-22}

\begin{document}
\publicationdetails{20}{2018}{1}{8}{3144}
\maketitle
\begin{abstract}
  The construction of the random intersection graph model is based on a random family of sets.  Such structures, which are derived from intersections of sets, appear in a natural manner in many applications. In this article we study the problem of finding a Hamilton cycle in a random intersection graph. To this end we analyse a classical algorithm for finding Hamilton cycles in random graphs (algorithm HAM) and study its efficiency on graphs from a family of random intersection graphs (denoted here by $\mathcal{G}\left(n,m,p\right)$). We prove that the threshold function for the property of HAM constructing a Hamilton cycle in $\mathcal{G}\left(n,m,p\right)$ is the same as the threshold function for the minimum degree at least two. Until now, known algorithms for finding Hamilton cycles in $\mathcal{G}\left(n,m,p\right)$ were designed to work in very small ranges of parameters and, unlike HAM, used the structure of the family of random sets.
\end{abstract}

\section{Introduction}\label{Introduction}

Since its introduction by  \cite{GpSubgraph} the random intersection graph model and its generalisations have proven to have many applications. To mention just a few: ``gate matrix layout'' for VLSI design (see e.g. \cite{GpSubgraph}), cluster analysis and classification (see e.g. \cite{RIGGodehardt1}), analysis of complex networks (see e.g. \cite{RIGClustering2, RIGTunableDegree}), secure wireless networks (see e.g. \cite{WSNphase2}) or epidemics (\cite{GpEpidemics}). For more details we refer the reader to survey papers \cite{RIGsurvey1,RIGsurvey2}.   

In the random intersection graph model $\mathcal{G}\left(n,m,p\right)$ to each vertex from the vertex set $\V$ $(|\V|=n)$ we assign a random set of its features $\W(v)$ from an auxiliary set $\W$ $(|\W|=m(n))$. For each $w\in \W$ and $v\in\V$ we have $w\in \W(v)$ with probability $p$, $p=p(n)\in (0,1)$, independently of all other elements from $\V$ and $\W$. We connect vertices $v$ and $v'$ by an edge if sets $\W(v)$ and $\W(v')$ intersect.

In the course of last years various properties of random intersection graph were studied. However, little is still known about algorithms which might efficiently construct or find structures in random intersection graphs. 
In this article we address the problem of efficiently finding a Hamilton cycle in $\mathcal{G}\left(n,m,p\right)$. The problem of determining a threshold function for the property of having a Hamilton cycle in the random intersection graph has already been studied by several authors, for example: \cite{SpirakisHamiltonCycles},  \cite{GpCoupling} (both for the model considered in this paper), and  \cite{UniformHamilton1} (for the uniform random intersection graph model). 
Here we analyse algorithmic aspects of the problem of finding a Hamilton cycle in $\mathcal{G}\left(n,m,p\right)$. \cite{SpirakisHamiltonAlgorytm}  proposed a randomised algorithm which with probability tending to 1 as $n\to\infty$ in polynomial time finds a Hamilton cycle in $\mathcal{G}\left(n,m,p\right)$ if $m/(n\ln n)\to 0$ and $p=\frac{\ln n +\omega(n)}{m}$ with $\omega(n)$ tending slowly to infinity. Our aim is to show that there exists an algorithm which efficiently finds a Hamilton cycle in $\mathcal{G}\left(n,m,p\right)$ for a wider range of parameters $n,m,p$. To this end we take algorithm  HAM introduced by  \cite{HamiltonAlg1} and analyse its performance on $\mathcal{G}\left(n,m,p\right)$. 

All limits in the paper are taken as $n\rightarrow \infty$.
Throughout the paper we use standard asymptotic notation
$a_n=o(b_n)$ if $a_n/b_n\to 0$ as $n\to\infty$ and $a_n=O(b_n)$ if there exists a constant $C$ such that $|a_n|\le C|b_n|$, for large $n$. By $\Bin{n}{p}$ we denote the binomial distribution with parameters $n$, $p$. We also use the phrase ``{\whp}'' to say with probability tending to one as $n$ tends to infinity. We consequently omit $\lfloor\cdot\rfloor$ and $\lceil\cdot\rceil$ for the sake of clarity of presentation. All inequalities hold for $n$ large enough.

\section{Main results}

Algorithm HAM, as presented by \cite{HamiltonAlg1}, is designed to search for a Hamilton cycle in any graph with the minimum degree at least $2$. It is assumed that the vertex set of the input graph   is ordered, i.e. $\V=\{v_1,v_2,\ldots,v_n\}$ (for simplicity we write $v_i<v_j$ if $i<j$). Algorithm HAM utilises the rotation technique introduced by \cite{ERHam1}. The rotation technique uses the fact that, given a path $P$ of length $k$ in a graph $\G$ (i.e. $P=(u_1,\ldots,u_k)$), if there is an edge $e=\{u_k,u_i\}$, $2\le i\le k-2$, we may construct another path of length $k$ denoted by
\begin{displaymath}
\text{ROTATE}(P, e) = (u_1,u_2,\ldots,u_i,u_k,u_{k-1},\ldots,u_{i+1}).
\end{displaymath}
\begin{figure}[p]
	\includegraphics[height=0.95\textheight]{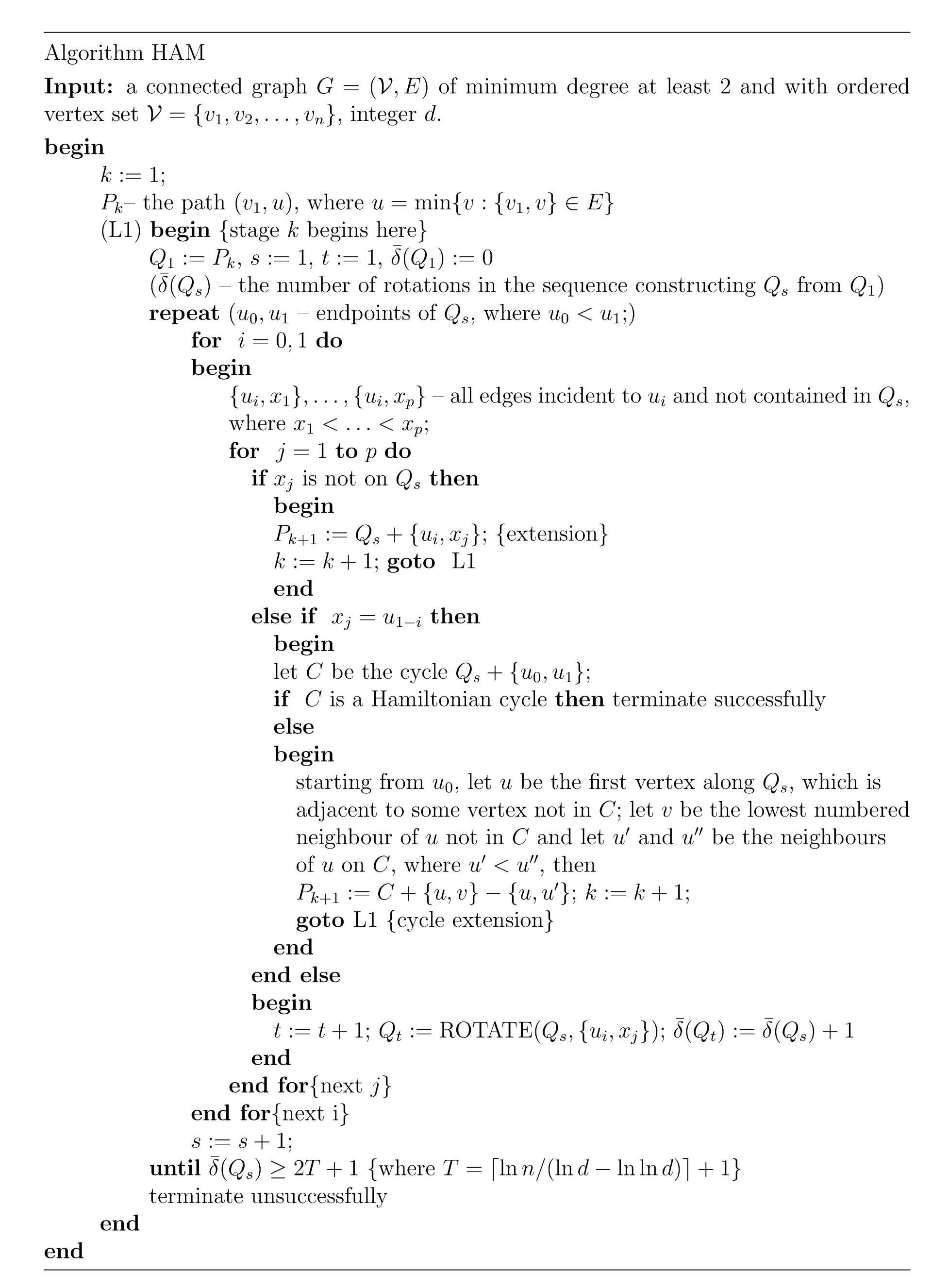}
	\caption{Algorithm HAM introduced by \cite{HamiltonAlg1}}\label{FigureHAM}
\end{figure}   
We state algorithm HAM following \cite{HamiltonAlg1} (see Figure~\ref{FigureHAM}). In most cases we keep notation consistent with the one introduced by \cite{HamiltonAlg1}.
Algorithm HAM, starting with a path consisting of one vertex $v_1$, in each step  extends the path (if its end has a neighbour outside the path or both endpoints are neighbours) or, if extension is not possible, it searches for new paths of the same length using P\'{o}sa's rotation technique. In the latter case the algorithm explores all possible rotations in the BFS--type manner, i.e. given a considered path $P=(u_1,\ldots,u_k)$ of length $k$, it explores all neighbours of $u_k$ and $u_1$ and new paths resulting from rotations related to those neighbours. Then for each new path, one by one, HAM extends the path or, if an extension is not possible, it explores neighbours of the ends of this path and does rotations. The algorithm stops if either it finds a Hamilton cycle or it is not able to extend the path of length $k$. More precisely, it stops at stage $k$ when it has explored without extension all paths which result from at most $2T+1$ ($T$ is a function of $n$ and $d$) rotations of the initial path of length $k$.

In what follows whenever HAM is executed on $\mathcal{G}\left(n,m,p\right)$ with $mp^2\le 1$ we set
\begin{displaymath}
d=nmp^2.
\end{displaymath}

The main result of this article is the following.

\begin{thm}\label{TwierdzenieHamiltonGp}
	Let $\mathcal{G}\left(n,m,p\right)$ be a random intersection graph with $\ln n=o(m)$ and $mp^2\le 1$, then
	\begin{displaymath}
	\lim_{n\to\infty}\Pra{\{\delta(\mathcal{G}\left(n,m,p\right))\ge 2\}\cap\{\text{HAM terminates unsuccessfully on }\mathcal{G}\left(n,m,p\right)\}}=0.
	\end{displaymath} 
\end{thm}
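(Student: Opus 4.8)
The plan is to follow the Bollob\'{a}s--Fenner--Frieze analysis of HAM from \cite{HamiltonAlg1}, isolating a model--independent deterministic reduction and then re-establishing, for $\mathcal{G}\left(n,m,p\right)$ in place of the binomial random graph, the expansion--type properties on which that reduction relies. Concretely, the aim is to exhibit a family $\mathcal{A}$ of ``obstruction configurations'' such that (i) for every graph $G$ with $\delta(G)\ge 2$, if HAM terminates unsuccessfully on $G$ then $G$ contains a member of $\mathcal{A}$, and (ii) \whp{} $\mathcal{G}\left(n,m,p\right)$ with the stated parameters contains no member of $\mathcal{A}$; since the event in the theorem is contained in $\{\mathcal{G}\left(n,m,p\right)\text{ contains an obstruction}\}$, this yields the conclusion. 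It is convenient to work with $d=nmp^{2}$ as the effective average--degree parameter and to dispose of two easy regimes first: if $d$ is so small that $\Pra{\delta(\mathcal{G}\left(n,m,p\right))\ge 2}\to 0$ the statement is trivial, and if $d$ is well above the threshold for $\delta\ge 2$ then $\mathcal{G}\left(n,m,p\right)$ \whp{} has such strong vertex expansion that only the soft half of the argument is needed; the substance is the narrow window around $d=\ln n+\Theta(\ln\ln n)$.

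For the deterministic reduction I would reproduce the argument of \cite{HamiltonAlg1} essentially verbatim, since it does not see the intersection structure. If HAM gets stuck at stage $k$ with current path $P$, then running the rotation BFS from the endpoints yields --- via P\'{o}sa's rotation lemma, and, when the reachable endpoint set is large, a further step combining the two endpoints and double rotations --- a set $S$ with $N_{G}(S)\subseteq V(P)$ and $\lvert N_{G}(S)\setminus S\rvert\le 2\lvert S\rvert$, with $\lvert S\rvert$ in the range considered in \cite{HamiltonAlg1}. After separating out the finitely many genuinely small exceptional patterns and the ``one endpoint extends, the other does not'' cases exactly as in \cite{HamiltonAlg1}, one may take $\mathcal{A}$ to consist of: (a) vertex sets $S$ with $\omega(1)\le\lvert S\rvert\le T$ and $\lvert N_{G}(S)\setminus S\rvert\le 2\lvert S\rvert$; (b) a bounded list of small subgraphs that are dense relative to their order; and (c) small configurations of mutually close vertices of degree at most $2$.

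The probabilistic step carries the weight, and I would run it inside the bipartite incidence structure $\mathcal{B}\left(n,m,p\right)$ whose projection is $\mathcal{G}\left(n,m,p\right)$, conditioning first on the sizes $\lvert V_{w}\rvert$ of the sets $V_{w}$ of vertices that possess feature $w$; these are independent $\Bin{n}{p}$ variables. The basic accounting is that $\sum_{w}\binom{\lvert V_{w}\rvert}{2}$ has expectation $\approx\tfrac12 dn$, and that, using $mp^{2}\le 1$ together with $\ln n=o(m)$, \whp{} no single feature is possessed by more than a slowly growing number of vertices --- in particular there is no large feature-clique forming a near-component. Conditioned on the feature sizes the assignment of vertices to features is exchangeable, so for a fixed $s$-set $S$ containing no atypically dense part, the event $\lvert N_{\mathcal{G}}(S)\setminus S\rvert\le 2\lvert S\rvert$ is a lower-tail large deviation for the number of distinct vertices reached through the $\approx smp$ features meeting $S$; a union bound over $\omega(1)\le s\le T$, combined with the exclusion of the subgraphs in (b), disposes of (a) provided the hypotheses place $d$ and $T$ in the admissible range. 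Configurations of type (b) are ruled out by a first-moment computation showing that an excess of edges over vertices inside a small set forces either an atypically large $\lvert V_{w}\rvert$ or several independent vertex--feature incidences, both rare under $mp^{2}\le 1$ and $\ln n=o(m)$. Finally, a vertex has $\mathcal{G}$-degree at most $2$ essentially iff the union of its feature sets reaches at most two other vertices, and from this one shows that \whp{} the vertices of degree exactly $2$ are few and pairwise far apart, so that $\Pra{\text{(c) occurs}}=o(1)$; hence $\Pra{\{\delta\ge 2\}\cap\{\text{HAM fails}\}}\le\Pra{\text{(a) or (b) or (c) occurs}}=o(1)$.

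I expect the main obstacle to be the combination of the edge dependencies intrinsic to the intersection model with the need to push the expansion and sparseness estimates all the way down to the threshold for minimum degree $2$, where there is essentially no slack. Beyond the usual near-threshold configurations of the binomial model, one must control obstructions glued together out of feature-cliques --- for instance small sets that fail to expand precisely because they lie inside one or two large sets $V_{w}$, or pendant feature-cliques forming small components --- which have no analogue in $G(n,p)$; it is exactly here that $mp^{2}\le 1$ (bounding the total clique volume $\sum_{w}\binom{\lvert V_{w}\rvert}{2}$) and $\ln n=o(m)$ (preventing any single feature from controlling a non-negligible part of the graph) become indispensable. The most technical part of the write-up should be making the union bounds in the probabilistic step uniform over the whole range $mp^{2}\le 1$, rather than at a single scale of $d$, while simultaneously keeping them compatible with the conditioning on $\{\delta\ge 2\}$.
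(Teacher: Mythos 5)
There is a genuine gap, and it sits exactly where the real difficulty of the theorem lies. Your step (i) — that failure of HAM on a graph with $\delta\ge 2$ forces a \emph{static} obstruction from a finite list $\mathcal{A}$, the main item being a non-expanding set of size between $\omega(1)$ and $T$ — is not correct, and even the corrected version cannot be finished by a first-moment union bound. When HAM gets stuck, the Pósa rotation closure does produce a set $S$ of reachable endpoints with $|N(S)|\le 2|S|$, but precisely because the graph \emph{does} have good expansion for all sets of size up to $n/d$ (this is property {\bf P3} of the paper), that set $S$ is forced to have \emph{linear} size, not size at most $T=o(\ln n)$. The decisive obstruction is therefore ``for every endpoint $x$ reachable in $\le T$ rotations, the linear-sized set of co-endpoints reachable in $\le 2T$ rotations contains no neighbour of $x$.'' The probability that a fixed vertex has no neighbour in a fixed linear-sized set is only $n^{-\Theta(1)}$ at the threshold for $\delta\ge 2$, while there are exponentially many candidate sets, and worse, the endpoint sets are determined by the entire graph (including the very adjacencies whose absence you are trying to bound), so the probability does not factor. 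A union bound over obstruction configurations therefore cannot close the argument; this is not a technical uniformity issue but a structural failure of the first-moment approach in the critical window.

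The idea you are missing — and the one Bollob\'as, Fenner and Frieze actually use, which the paper adapts — is a two-round exposure (sprinkling) argument. The paper deletes each vertex--feature incidence of the underlying bipartite graph independently with probability $q=\lambda/n$, shows (Lemma~\ref{LemDeletable}) that with probability at least $(1-o(1))e^{-2\lambda(2T+2)}$ the deleted edge set is ``deletable'' (it avoids SMALL vertices and the algorithm's history $\Ham(\G)$ and removes few edges at each LARGE vertex), deduces via Lemma~\ref{LemENDG} that HAM then fails on the subgraph with linear endpoint sets, and finally uses the \emph{reserved} randomness of the deleted incidences to show that the probability that none of them closes a cycle is at most $e^{-\lambda'\ln n}$. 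Comparing the two bounds gives $\Pra{\text{HAM fails},\ \delta\ge 2}\le e^{-\lambda'\ln n+2\lambda(2T+2)}=o(1)$ since $T=o(\ln n)$. Your proposal contains the expansion and sparseness estimates that feed into this scheme (and your attention to feature-cliques and the bipartite structure is exactly where the model-specific work lives, e.g.\ the distinction between deleting $\{v,w\}$ and $\{w,v'\}$ incidences), but without the deletion/reconstruction step there is no way to convert ``linear endpoint sets exist'' into ``a closing edge exists,'' and the proof does not go through.
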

We  rigorously prove the above theorem in the case $m\ge n^{24/25}$. Then we present a sketch of the proof in the latter case.  
\begin{rem}
	We give the main theorem only for $\ln n=o(m)$ and $mp^2\le 1$. The latter cases would require considering more cases in the proofs however the reasoning would be easier. Namely, if $m=O(\ln n)$, in the relevant range of parameters, {\whp} $\mathcal{G}\left(n,m,p\right)$ consists of $m=O(\ln n)$ independent very large cliques thus it is very close to a complete graph. Similarly if $mp^2>1$ then the probability of given two vertices being connected by an edge is a constant and {\whp} almost all vertices have degree at least $n/1000$. This case would require, among others, setting $d=n$, in some places replacing  $d_0$ by $n$, setting $T=2$, and redefining property {\bf P3} defined in the proof (as in this case $n/d_1<1$). Even though this case would require changes to some parts of the proof, in general the reasoning would follow the same lies and would be easier. Thus we do not include it for shortness.    	
\end{rem}
\begin{rem}\label{RemComplexity}
	Using the same arguments as \cite{HamiltonAlg1}, we may show that  {\whp} the time complexity of HAM on $\mathcal{G}\left(n,m,p\right)$ is $O(n^{4+\varepsilon})$ (for any $\eps>0$). For completeness we give the proof in Appendix~A. 
\end{rem}	

Theorem~\ref{TwierdzenieHamiltonGp} together with the probability of the event that the minimum degree of $\mathcal{G}\left(n,m,p\right)$ is at least 2, $\delta(\mathcal{G}\left(n,m,p\right))\ge 2$,  gives the probability of the property that HAM finds a Hamilton cycle in $\mathcal{G}\left(n,m,p\right)$. The proof of Theorem~\ref{LemmaDegree} is presented in Appendix B. 
\begin{thm}\label{LemmaDegree} 
	If $\ln^2 n=o(m)$
	$$p(1-(1-p)^{n-1})=\frac{\ln n + \ln a_n + c_n}{m},$$
	where $c_n=o(\ln n)$, and for some $\eps>0$
	$$a_n = 
	\begin{cases}
	1&\text{for }m<(1-\eps)\frac{n\ln n}{\ln\ln n};\\
	\frac{np\ln n}{e^{np}-1}&\text{for }m>(1+\eps)\frac{n\ln n}{\ln\ln n},\\
	\end{cases}
	$$
	then
	$$
	\lim_{n\to\infty}\Pra{\delta(\mathcal{G}\left(n,m,p\right))\ge 2}=
	\begin{cases}
	0&\text{ for }c_n\to -\infty;\\
	e^{-e^{-c}}&\text{ for }c_n\to c\in (-\infty,\infty);\\
	1&\text{ for }c_n\to \infty.
	\end{cases}
	$$
\end{thm}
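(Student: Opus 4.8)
The plan is to establish a Poisson approximation for the number of vertices of degree at most $1$ in $\mathcal{G}\left(n,m,p\right)$, and then read off the probability that this number is zero. Write $q = q(n) := p(1-(1-p)^{n-1})$ for the probability that a fixed vertex $v$ is adjacent to a fixed other vertex $u$; by hypothesis $q = (\ln n + \ln a_n + c_n)/m$. The heart of the matter is that in the relevant range $q \to 0$ and $mq \sim \ln n$, so each vertex individually has degree roughly $\mathrm{Po}(nq) = \mathrm{Po}(np(1-(1-p)^{n-1}))$-distributed, and a vertex has degree $\le 1$ with probability comparable to $(1+nq)e^{-nq} \approx a_n e^{-c_n}/n$ up to lower-order factors. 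Summing over $n$ vertices, the expected number of such low-degree vertices tends to $e^{-c}$ when $c_n \to c$, to $\infty$ when $c_n \to -\infty$, and to $0$ when $c_n \to \infty$.

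The steps I would carry out, in order. First, a first-moment computation: let $Z$ denote the number of vertices $v$ with $\deg(v) \le 1$. I would compute $\E Z = n\cdot\Pra{\deg(v)\le 1}$ carefully. The subtlety is that, conditioned on the feature set $\W(v)$ having size $j$, the events ``$v \sim u$'' for the various $u$ are independent with probability $1-(1-p)^{j}$ each, but $j \sim \Bin{m}{p}$ is itself random; one integrates (sums) over $j$ using that $\Bin{m}{p}$ concentrates near $mp$, and that the typical value of $1-(1-p)^j$ is $1-(1-p)^{mp}$ — this is exactly where the correction factor $a_n = np\ln n/(e^{np}-1)$ for large $m$ comes from, since for large $m$ one cannot replace $1-(1-p)^j$ by $jp$. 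I would show $\E Z \to e^{-c}$, $0$, or $\infty$ in the three regimes, treating the two ranges of $m$ separately exactly as $a_n$ is defined. Second, in the regime $c_n \to \infty$ I would invoke Markov's inequality to conclude $\Pra{\delta \ge 2} = \Pra{Z = 0} \to 1$. Third, in the regimes $c_n \to c$ and $c_n \to -\infty$ I would run the method of moments (Chen–Stein / Bonferroni): I would show that for each fixed $r$ the $r$-th factorial moment $\E[(Z)_r] \to (\E Z)^r \to (e^{-c})^r$, which forces $Z$ to converge in distribution to $\mathrm{Po}(e^{-c})$, hence $\Pra{Z=0} \to e^{-e^{-c}}$ (and $\to 0$ when $\E Z \to \infty$). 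For the factorial moments one must control the joint probability that $r$ specified vertices all have degree $\le 1$; the correlations are weak because two vertices $v, v'$ interact only through the (few) shared features, and $mq^2 = o(1)$ in this range (using $\ln^2 n = o(m)$), so the dependence contributes only a $1+o(1)$ factor — this hypothesis $\ln^2 n = o(m)$ is precisely what makes the feature-overlap negligible.

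The main obstacle I expect is the first-moment estimate in the dense regime $m > (1+\eps)\frac{n\ln n}{\ln\ln n}$, i.e. getting the constant $a_n = np\ln n/(e^{np}-1)$ exactly right. Here $np$ need not be small, so $1-(1-p)^j \approx 1 - e^{-jp}$ is genuinely nonlinear in $j$, and one must average $e^{-(n-1)(1-e^{-jp})}$ (the conditional probability of isolation given $|\W(v)| = j$, up to the degree-$1$ term) against the $\Bin{m}{p}$ law of $j$ with enough precision to pin down the multiplicative constant, not merely the logarithmic order. This requires a Laplace-type expansion of the sum $\sum_j \binom{m}{j} p^j (1-p)^{m-j} \exp(-(n-1)(1-e^{-jp}))$ around $j = mp$, keeping the Gaussian fluctuation term; the second-order term in the exponent conspires with the variance $mp(1-p) \approx mp$ to produce exactly the factor $(e^{np}-1)/(np)$ after simplification. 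Once this constant is correctly extracted, the remaining arguments (Markov, factorial moments) are routine given the stated hypotheses.
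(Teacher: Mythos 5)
Your overall architecture --- a first-moment computation for the number $Z$ of vertices of degree at most $1$, Markov's inequality for $c_n\to\infty$, and the method of moments to get $Z\to\Po{e^{-c}}$ otherwise --- is exactly the paper's (the paper merely splits $Z$ into isolated vertices, handled by citing a known lemma, and degree-$1$ vertices, handled by factorial moments when $m>(1+\eps)n\ln n/\ln\ln n$ and shown {\whp} not to exist when $m<(1-\eps)n\ln n/\ln\ln n$). However, the quantitative setup you propose for the first moment contains concrete errors that would derail the computation. First, $q=p(1-(1-p)^{n-1})$ is not the probability that two fixed vertices are adjacent (that is $1-(1-p^2)^m\approx mp^2$); it is the probability that a fixed feature is chosen by $v$ and by at least one other vertex, so $mq=\dO$ counts $v$'s ``useful'' features, not its neighbours. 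Consequently the degree of $v$ is not approximately $\Po{nq}$, the isolation probability is $(1-q)^m\approx e^{-mq}=a_n^{-1}e^{-c_n}/n$ rather than $e^{-nq}$, and your claim $\Pra{\deg(v)\le 1}\approx(1+nq)e^{-nq}\approx a_n e^{-c_n}/n$ is inconsistent with your own conclusion $\E Z\to e^{-c}$: the correct target is $e^{-c_n}/n$, with the isolated and degree-$1$ contributions in ratio $1:(a_n-1)$.

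Second, the Laplace/Gaussian expansion you flag as the main obstacle is the wrong tool and would not produce the constant. Conditioning on $J=|\W(v)|\sim\Bin{m}{p}$ and using conditional independence of the events $\{u\sim v\}$ is sound, but the resulting average is an exact probability generating function: $\Pra{\deg(v)=0}=\E\bigl[((1-p)^{n-1})^{J}\bigr]=(1-p(1-(1-p)^{n-1}))^m$ and $\Pra{\deg(v)=1}=(n-1)\bigl[(1-p(1-(1-p)^{n-2}))^m-(1-p(1-(1-p)^{n-1}))^m\bigr]$, so no expansion is needed. If instead you expand the exponent around $j=mp$ and keep only the Gaussian term, you lose control precisely when $np$ is not $o(1)$: the relevant exponential tilt is $\theta\approx np$, and truncating $\ln\E[e^{-\theta J}]=m\ln(1-p+pe^{-\theta})$ at second order in $\theta$ is valid only for $np\to 0$; all higher cumulants contribute, and that is exactly where $(e^{np}-1)/(np)$ comes from. (It equals $(1-(1-p)^{n-1})/(np(1-p)^{n-1})$ asymptotically, the reciprocal of the conditional probability that a feature shared by $v$ is shared with exactly one other vertex; the paper extracts it by a direct per-feature count in the associated bipartite graph rather than by any expansion.) Your identification of $\ln^2 n=o(m)$ as what makes feature overlaps negligible in the factorial moments is correct, and the Markov/method-of-moments endgame is fine, but the first-moment computation needs to be redone along exact generating-function (or the paper's combinatorial) lines before the rest can go through.
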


In the above mentioned theorem we exclude the case $m = (1+o(1))n\ln n/\ln\ln n$. This case may be analysed by the same methods as those presented in the proofs but would require several additional case studies and more cases in the definition of $a_n$. We omit it for the sake of clarity of presentation. The result may be also expanded to the case $c_n=\Omega(\ln n)$ by a simple coupling and monotonicity of the property $\{\delta(\mathcal{G}\left(n,m,p\right))\ge 2\}$.

The remaining part of the article is organized as follows. In Section~\ref{SectionNotation} we introduce some notation and gather facts which will be used repeatedly in the whole article. In the following three         sections we present a proof of Theorem~\ref{TwierdzenieHamiltonGp} in the case $mp^2\le 1$ and  $m\ge n^{1-\eps}$ for $\eps = 1/25$. Namely, in Section~\ref{SectionProperties} we study the properties of $\mathcal{G}\left(n,m,p\right)$, which are crucial for the analysis of algorithm HAM on $\mathcal{G}\left(n,m,p\right)$. Then in Section~\ref{SectionDeletable} we discuss the notion of deletable sets. In Section~\ref{SectionProof} we prove Theorem~\ref{TwierdzenieHamiltonGp} in the case $mp^2\le 1$ and $m\ge n^{1-\eps}$   using facts proved in the first part of the article. In Section~\ref{SectionSmallm}  we give a sketch of the proof in the case where $m<n^{1-\eps}$.

\section{Notation and important facts}\label{SectionNotation}

For any graph $\G$ we denote by $E(\G)$ its edge set. If we mention an intersection graph $\G$ we mean not only the graph but also the underlying structure of feature sets attributed to vertices. For any intersection graph $\G$ with a vertex set $\V$ and feature set $\W$, if $w\in\W(v)$ then we say that vertex $v$ \emph{chose} feature $w$ and $w$ \emph{was chosen by} $v$.
For $v\in \V$, $w\in \W$,  $\Set\subseteq \V$, and $\R\subseteq \W$ in random intersection graph $\G$
we use the following notation
\begin{align*}
\W_{\G}(v)&=\{w\in\W: \text{$v$ chose $w$}\},
&\W_{\G}(\Set)&=\bigcup_{v\in \Set}\W_{\G}(v);
\intertext{{\it i.e.} the set of features chosen by $v$ (and, respectively, chosen by vertices from $\Set$);}
\V_{\G}(w)&=\{v\in\V: \text{$w$ was chosen by $v$}\},
&\V_{\G}(\R)&=\bigcup_{w\in \R}\V_{\G}(w);
\intertext{{\it i.e.} the set of vertices which chose $w$ (and, respectively, chose features from $\R$);}
\W_{\G}'(v)&=\{w\in\W(v): |\V_{\G}(w)|\ge 2\},
&\W_{\G}'(\Set)&=\bigcup_{v\in \Set}\W'_{\G}(v);
\intertext{{\it i.e.} the set of features chosen by  $v$ (and, respectively, chosen by vertices from $\Set$) which contribute to at least one edge in $\G$;}
\W_{\G}''(\Set)&=\{w\in\W(\Set): |\V_{\G}(w)\cap \Set|\ge 2\};
\intertext{{\it i.e.} the set of features chosen by at least two vertices from $\Set$.}
\intertext{Moreover for any graph $\G$ with vertex set $\V$ and edge set $E(\G)$ and for any $\Set\subseteq\V$ we set } 
\N_{\G}(v)&=\{v'\in\V\setminus\{v\}: \{v,v'\}\in E(\G)\},
&
\N_{\G}(\Set)&=\bigcup_{v\in \Set}\N_{\G}(v)\setminus \Set;
\end{align*}
{\it i.e.} the set of neighbours  of  $v$ in $\G$  (and, respectively, neighbours  of  $\Set$ in $\G$). 

In addition 
\begin{align*}
W_{\G}(v)&=|\W_{\G}(v)|,&W_{\G}(\Set)&=|\W_{\G}(\Set)|,\\
V_{\G}(w)&=|\V(w)|,&V_{\G}(\R)&=|\V_{\G}(\R)|,\\
W_{\G}'(v)&=|\W_{\G}'(v)|,&W_{\G}'(\Set)&=|\W_{\G}'(\Set)|,\\
W_{\G}''(\Set)&=|\W_{\G}''(\Set)|,&&\\
N_{\G}(v)&=|\N_{\G}(v)|,&N_{\G}(\Set)&=|\N_{\G}(\Set)|,
\end{align*}
and $\deg(v)=\deg_{\G}(v)=N_{\G}(v)$.

In the above mentioned notation we consequently omit subscript $\G$ when it is clear from the context which $\G$ we have in mind. 

Alternatively, in order to obtain an instance $\G$ of $\mathcal{G}\left(n,m,p\right)$, we may first construct an instance $\B$ of a random bipartite graph $\Bnmp$ with bipartition $(\V,\W)$ in which each edge $\{v,w\}$, $v\in\V$ and $w\in\W$, appears independently with probability $p$.  We connect vertices $v\in \V$ and $v'\in\V$ by an edge in $\mathcal{G}\left(n,m,p\right)$ if they have a common neighbour in $\Bnmp$. An instance $\G$ of $\mathcal{G}\left(n,m,p\right)$ constructed in this manner from $\B$ we will call \emph{associated with} $\B$. Note that $\W(v)$, $v\in\V$, is the set of neighbours of vertex $v$  in $\Bnmp$. Moreover $\V(w)$ is the set of neighbours of $w\in\W$ in $\Bnmp$. In addition $W(v)$ and $V(w)$ are degrees of $v\in\V$ and $w\in\W$ in $\Bnmp$.

Moreover, let 

\begin{equation*}
\dO=mp(1-(1-p)^{n-1}),\quad\quad\djeden=nmp^2.
\end{equation*}
Note that $\dO$ is the expected value of $W'_{\mathcal{G}\left(n,m,p\right)}(v)$ and $\djeden$ is almost the expected degree of a vertex in $\mathcal{G}\left(n,m,p\right)$.

The following relations between $d_0$, $\djeden$, and $mp$ will be frequently used without mentioning them directly.  
In the range of parameters we are interested in ({\it i.e.} $n$, $m$, and $p$ such that the probability of event   $\{\delta(\mathcal{G}\left(n,m,p\right))\ge 2\}$ does not tend to $0$), we have 
$$\dO\ge (1+o(1))\ln n .$$
For the proof  see for example Lemma 5.1 in \cite{RIGcoupling2}.

Moreover by definition
\begin{align*}
\dO&\le mp,\\
\dO&\le nmp^2 = \djeden,\\
\dO&\le mp\min\{np,1\},\\
\djeden&\le 2\dO\max\{np,1\}.
\end{align*}
In addition,
\begin{itemize}
	\item[] if $np > 40$ then $0.9\, mp\le \dO \le mp$ and
	\item[] if $np\le 40$ then $\dO \le \djeden\le 40.1 \dO$. 
\end{itemize}
To get the first inequality for $np>40$ we note that 
$$d_0\ge mp(1-e^{-(n-1)p})\ge mp(1-e^{-39})\ge 0.9mp.$$
In the proof of the second inequality for $np\le 40$ we have, for large $n$, 
$$
40.1 d_0\ge 40.1 d_1 \cdot \frac{n-1}{n}\cdot \frac{1-e^{-(n-1)p}}{(n-1)p}\ge (1+o(1)) 40.1 d_1 \frac{1-e^{-40}}{40}\ge d_1
$$
since $f(x)=(1-e^{-x})/x$ is decreasing for $x > 0$. 

We will frequently use Chernoff's inequality  in the following form (see for example Theorem 2.1 in \cite{KsiazkaJLR}). 

\begin{lem}
	Let $X$ be a random variable with the binomial distribution and expected value $\mu$, then for any $0<\eps<1$ 
	$$
	\Pra{X\le \eps \mu}\le \exp\left(-\psi(\eps)\mu\right),
	$$
	where
	$$
	\psi(\eps)=\eps\ln \eps + 1 - \eps. 
	$$
\end{lem}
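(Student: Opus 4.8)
The plan is to use the classical Chernoff bounding technique: bound the lower tail by an exponential moment via Markov's inequality, and then optimise over the free parameter. Fix $t>0$. Since $x\mapsto e^{-tx}$ is strictly decreasing, the event $\{X\le \eps\mu\}$ coincides with $\{e^{-tX}\ge e^{-t\eps\mu}\}$, so Markov's inequality applied to the non-negative variable $e^{-tX}$ gives
$$\Pra{X\le \eps\mu}\le e^{t\eps\mu}\,\E\!\left[e^{-tX}\right].$$

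Next I would evaluate the moment generating function. Writing $X\sim\Bin{n}{p}$ as a sum of $n$ independent Bernoulli$(p)$ random variables with $\mu=np$, independence yields $\E[e^{-tX}]=\left(1-p+pe^{-t}\right)^{n}=\left(1+p(e^{-t}-1)\right)^{n}$, and the elementary inequality $1+x\le e^{x}$ applied with $x=p(e^{-t}-1)$ gives $\E[e^{-tX}]\le\exp\!\left(np(e^{-t}-1)\right)=\exp\!\left(\mu(e^{-t}-1)\right)$. Combining this with the previous display,
$$\Pra{X\le \eps\mu}\le\exp\!\left(\mu\left(\eps t+e^{-t}-1\right)\right)\qquad\text{for every }t>0.$$

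Finally I would optimise the exponent. The function $g(t)=\eps t+e^{-t}-1$ is convex with $g'(t)=\eps-e^{-t}$, hence minimised at $t_{0}=\ln(1/\eps)$; crucially $t_{0}>0$ exactly because $0<\eps<1$, which is what makes the Markov step above applicable. Substituting $e^{-t_{0}}=\eps$ gives $g(t_{0})=-\eps\ln\eps+\eps-1=-\psi(\eps)$, and therefore $\Pra{X\le\eps\mu}\le\exp(-\psi(\eps)\mu)$, as claimed. I do not anticipate a genuine obstacle here; the only points worth a line of comment are that the minimiser $t_{0}$ indeed lies in $(0,\infty)$ and that $\psi(\eps)=\eps\ln\eps+1-\eps>0$ on $(0,1)$, so the bound is non-vacuous. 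If one prefers to avoid calculus, it is enough to substitute the guessed value $t=\ln(1/\eps)$ directly into the last displayed inequality.
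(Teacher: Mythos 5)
Your proof is correct and is the standard Chernoff/exponential-moment argument; the paper does not prove this lemma itself but cites it from Janson, \L{}uczak and Ruci\'nski (Theorem 2.1 there), where essentially the same derivation is given. The Markov step, the bound $\E[e^{-tX}]\le\exp(\mu(e^{-t}-1))$, and the optimisation at $t_0=\ln(1/\eps)$ are all carried out correctly, so nothing needs to be added.
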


\section{Graph properties}\label{SectionProperties}

In this and the following section we assume that $m\ge n^{1-\eps}$ (for $\eps=1/25$) and $mp^2\le 1$ ({\it i.e.} $n\ge \djeden$).

The efficiency of algorithm HAM on a random graph is due to the fact that random graphs, in general,  have good expansion properties. Below we list the properties which will be used in the analysis of algorithm HAM on $\mathcal{G}\left(n,m,p\right)$.   

Given an intersection graph $\G$ associated with a bipartite graph $\B$, integers  $\dO=\dO(n)$, $\djeden=\djeden(n)$, $m=m(n)$, a real $p=p(n)\in (0,1)$, sets $\SMALLv\subseteq \V$ and $\LARGEv=\V\setminus\SMALLv$, and a constant $b_1>0$  we define the following properties:
\begin{itemize}
	\item[{\bf P0}] $\delta(\G)\ge 2$;
	\item[{\bf P1}]  $|\SMALLv|\le n^{1/3}$ and \\
	if $d_0\ge 2\ln n$ then $|\SMALLv|=\emptyset$;
	\item[{\bf P2}] There are no $v,v'\in \SMALLv$ at distance 4 or less apart;
	\item [{\bf P3}] 		
	For all $\Set\subseteq \LARGEv$, if $|\Set|\le n/\djeden$ then $N(\Set)\ge b_1 \djeden |\Set|$. 
	\item [{\bf P4}] 		
	For all $v\in\V$ 
	$$W(v)\le 4mp,\quad W'(v)\le 4\dO,\quad N(v)\le 12\djeden.$$
	\item [{\bf P5}] For all $w\in\W$ 
	$$V(w)\le \frac{\ln n}{\ln\ln n} \max\{np,4\}.$$
\end{itemize}

\begin{lem}\label{LemProperties}
	Let $\mathcal{G}\left(n,m,p\right)$ be a random intersection graph with $p\le \sqrt{(3\ln n)/m}$ and
	$$
	\dO\ge (1+o(1))\ln n.
	$$
	Moreover let 
	$$
	\SMALLv=\{v\in \V: W'(v)\le 0.1\dO\},\quad \LARGEv=\{v\in \V: W'(v) > 0.1\dO\}.
	$$
	If $m\ge n^{1-\eps}$ $(\eps=1/25)$ then {\whp} $\mathcal{G}\left(n,m,p\right)$ has properties {\bf P1}--{\bf P5} with $b_1=0.001$.
\end{lem}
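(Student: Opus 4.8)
The plan is to prove that each of P1--P5 fails with probability $o(1)$ and finish with a union bound over these five events. All computations I would carry out in the associated random bipartite graph $\B=\Bnmp$, using throughout that for fixed $v,w$ the quantities $W(v),W'(v),V(w)$ have distributions $\Bin{m}{p}$, $\Bin{m}{p(1-(1-p)^{n-1})}$, $\Bin{n}{p}$ with means $mp,\dO,np$, that distinct features are chosen independently, and that $mp\ge\dO\ge(1-o(1))\ln n$.

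\textbf{P4 and P5 (pure tail bounds).} For P4 I would apply the upper-tail Chernoff inequality to $W(v)\sim\Bin{m}{p}$ and to $W'(v)\sim\Bin{m}{p(1-(1-p)^{n-1})}$, and for $N(v)$ first condition on $\W(v)$ (so $N(v)$ is dominated by a sum of $W(v)$ independent $\Bin{n-1}{p}$'s) and combine $W(v)\le 4mp$ with one more Chernoff bound; since $mp,\dO,\djeden\ge(1-o(1))\ln n$ and the constants $4,12$ push the exponents strictly above $\ln n$, a union bound over $\V$ is affordable. For P5 I would use $\Pra{V(w)\ge k}\le(enp/k)^k$ with $k=\tfrac{\ln n}{\ln\ln n}\max\{np,4\}$; then $enp/k\le e\ln\ln n/\ln n$, so the bound is $(o(1))^k$, and splitting on $np\ge 4$ versus $np<4$ and using $m\le p^{-2}$ (hence $\ln m\le 2\ln(1/p)$) one checks $m\,\Pra{V(w)\ge k}\to 0$.

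\textbf{P1 and P2.} For P1, the Chernoff lemma gives $\Pra{v\in\SMALLv}=\Pra{W'(v)\le 0.1\dO}\le e^{-\psi(0.1)\dO}$; since $\psi(0.1)=0.9-0.1\ln 10>2/3$ and $\dO\ge(1-o(1))\ln n$ this is $n^{-2/3-\Omega(1)}$, so $\E|\SMALLv|=o(n^{1/3})$ and Markov gives $|\SMALLv|\le n^{1/3}$, while if $\dO\ge 2\ln n$ the same bound is $n^{-4/3-\Omega(1)}$ and a union bound over $\V$ forces $\SMALLv=\emptyset$. For P2 I would work on the $1-o(1)$-probability event from P1 where either $\SMALLv=\emptyset$ (nothing to prove) or $\dO<2\ln n$; in the latter case $mp^2\le 1$ and $m\ge n^{24/25}$ force $\djeden=O(n^{1/25}\ln^2 n)$ (if $np\le 40$ then $\djeden\le 40.1\dO=O(\ln n)$; if $np>40$ then $\dO\ge 0.9mp$ gives $mp=O(\ln n)$ and $np\le n/m\le n^{1/25}$, so $\djeden=np\cdot mp=O(n^{1/25}\ln^2 n)$). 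A P2-violation yields two vertices of $\SMALLv$ joined by a path of length $k\le 4$, and I would bound the expected number of such witnesses. Each edge appears with probability $\le mp^2=\djeden/n$, and since $mp\ge 1$ the configurations where two consecutive edges of a path reuse a feature are of lower order, so the expected number of paths of length $\le 4$ is $O(n\djeden^4)$. Conditionally on a fixed such path, its two endpoints both lie in $\SMALLv$ with probability $n^{-2\psi(0.1)+o(1)}$: reveal the (at most four) features on the path, let $\mathcal M$ be the set of remaining features chosen by some vertex outside the two endpoints, note $|\mathcal M|\ge(1-o(1))mp(1-(1-p)^{n-1})$ {\whp}, and observe that given $\mathcal M$ the two endpoint values $W'(\cdot)$ dominate \emph{independent} $\Bin{|\mathcal M|}{p}$ variables each of mean $(1-o(1))\dO$. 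Thus the expected number of witnesses is $O(n\djeden^4)\cdot n^{-2\psi(0.1)+o(1)}=O\!\bigl(n^{\,1+4/25-2\psi(0.1)+o(1)}\bigr)=o(1)$, since $2\psi(0.1)>1+4/25$.

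\textbf{P3 and the main obstacle.} Conditioning on $\W(S)$, each $u\notin S$ is a neighbour of $S$ independently with probability $1-(1-p)^{|\W(S)|}$, so $N(S)\sim\Bin{n-|S|}{\,1-(1-p)^{|\W(S)|}}$; by a Chernoff bound it then suffices to show {\whp} that every $S\subseteq\LARGEv$ with $|S|\le n/\djeden$ has $(n-|S|)(1-(1-p)^{|\W(S)|})$ larger than $b_1\djeden|S|$ by a large constant factor. For $|S|\ge 2$ this reduces to a lower bound $|\W(S)|\ge c\,\dO|S|$, which I would prove by union bound over $S$ after observing that, for $S\subseteq\LARGEv$, $\sum_{v\in S}W(v)\ge\sum_{v\in S}W'(v)>0.1\dO|S|$ while $\sum_{v\in S}W(v)-|\W(S)|=\sum_w(|\V(w)\cap S|-1)^{+}\le\sum_w\binom{|\V(w)\cap S|}{2}=:C_S$; hence $|\W(S)|$ is small only if the collision count $C_S$ is large, and since the summands $\binom{\Bin{|S|}{p}}{2}$ are independent over $w$ one gets $\Pra{C_S\ge t}\le (O(m|S|^2p^2))^t/t!=(O(\djeden|S|^2/n))^t/t!$, which for $t=\Theta(\dO|S|)$ makes $\binom{n}{|S|}\Pra{C_S\ge t}$ summable over $|S|$. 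For $|S|=1$ this is a minimum-degree statement for $\LARGEv$: comparing $\deg(v)=|\bigcup_{w\in\W'(v)}\V(w)\setminus\{v\}|$ with $W'(v)>0.1\dO$, using P5 to bound the multiplicities $|\V(w)|$ and a similar collision estimate, and distinguishing whether $\djeden/\dO$ is bounded ($np=O(1)$) or not — in the unbounded case each of the $>0.1\dO$ shared features of $v$ carries about $np$ choosers, so $\deg(v)\gtrsim np\cdot\dO=\djeden$. Assembling: since each of P1--P5 fails with probability $o(1)$, {\whp} $\mathcal{G}\left(n,m,p\right)$ has all of them with $b_1=0.001$. The genuinely delicate part is P3: it is an expansion property that must hold \emph{uniformly} over exponentially many small subsets of $\LARGEv$ while the ``mean degree'' $\djeden$ may itself be a growing power of $n$, which defeats the textbook first-moment bound (enumerating a small neighbourhood set) and forces one to exploit both the defining inequality of $\LARGEv$ and the rarity of collisions in $\B$ — a two-round exposure of $\B$ (revealing the choices of two disjoint halves of $\V$ in turn) being the natural device. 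P2 is analogous but easier, its whole content being that once $\SMALLv\ne\emptyset$ the parameter $\djeden$ is only a tiny power of $n$, which is exactly where $\eps=1/25$ enters.
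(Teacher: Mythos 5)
Your treatments of \textbf{P1}, \textbf{P2}, \textbf{P4} and \textbf{P5} follow essentially the paper's own arguments and are sound (modulo the typo $|\mathcal M|\ge(1-o(1))mp(1-(1-p)^{n-1})$, which should read $|\mathcal M|\,p\ge(1-o(1))\dO$). The gap is in \textbf{P3}, precisely the part you flag as delicate. First, your reduction is at the wrong scale. Given $|\W(\Set)|=W$, the mean of $N(\Set)\sim\Bin{n-|\Set|}{1-(1-p)^{W}}$ is about $nWp$, so to reach $b_1\djeden|\Set|=b_1nmp^2|\Set|$ you need $W\gtrsim mp|\Set|$, not $W\gtrsim \dO|\Set|$. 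When $np=o(1)$ (e.g.\ $m=n^2$, $p=n^{-3/2}\ln n$, which satisfies all hypotheses) one has $\dO=(1+o(1))\djeden=o(mp)$, so $|\W(\Set)|\ge c\,\dO|\Set|$ only gives $\E N(\Set)\approx c|\Set|\djeden\cdot np=o(\djeden|\Set|)$ and the conclusion fails; and the hypothesis $\Set\subseteq\LARGEv$ only supplies $\sum_v W'(v)>0.1\dO|\Set|$, which is far below the $mp|\Set|$ scale needed there. Moreover ``by a Chernoff bound it then suffices'' breaks for small sets when $\djeden=(1+o(1))\ln n$: a constant-factor lower deviation of a $\Bin{n}{\Theta(s\djeden)/n}$ variable has probability $n^{-cs}$ with a small constant $c$, which does not beat $\binom{n}{s}\approx n^{s}$. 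This is exactly why the paper splits \textbf{P3} into three regimes and, for small sets with $np\le 40$, replaces the Chernoff step by the combinatorial observation $\W'(\Set)\subseteq\W''(\Set\cup\N(\Set))$ combined with $W''(\K)\le 2|\K|$ for all small $\K$.

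Second, your key estimate $\Pra{C_S\ge t}\le (O(m|S|^2p^2))^{t}/t!$ is not valid. The quantity $C_S=\sum_w\binom{|\V(w)\cap S|}{2}$ is a sum over incidences $(w,\{u,u'\})$ of \emph{positively correlated} indicators — equivalently, a sum over $w$ of the unbounded variables $\binom{X_w}{2}$ with $X_w\sim\Bin{|S|}{p}$ — so the factorial-moment bound that yields a Poisson-type tail for independent indicators does not apply. The upper tail of $C_S$ is compound-Poisson-like: it is dominated by the event that a single feature is chosen by about $\sqrt{2t}$ vertices of $S$, whose probability is roughly $m\binom{|S|}{\sqrt{2t}}p^{\sqrt{2t}}$, vastly larger than your bound when $t=\Theta(\dO|S|)$ and $|S|$ is large. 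Worse, this contribution multiplied by $\binom{n}{|S|}$ can diverge (take $np\le 40$, $\dO=\ln^{10}n$, $|S|=n/\djeden$), so no correct first-moment bound over all $S$ can close the argument; one must first cap feature popularity globally via \textbf{P5} and then count multi-chosen \emph{features} (the paper's $W''$, a genuine sum of independent indicators over $w$) rather than colliding pairs — and even then the deterministic step must be linear, not quadratic, in the maximal feature multiplicity. As written, \textbf{P3} — and hence the lemma — is not established.
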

It is easy to prove by the first moment method that if $p>\sqrt{3\ln n/m}$ then {\whp} $\mathcal{G}\left(n,m,p\right)$ is a complete graph. Therefore if $p>\sqrt{3\ln n/m}$ then {\whp} HAM finds a Hamilton cycle in $\mathcal{G}\left(n,m,p\right)$ without any problem. This is why we may restrict ourselves to the case $p\le \sqrt{3\ln n/m}$.    

\pagebreak

\begin{proof}[ of Lemma~\ref{LemProperties}]
	Recall that all inequalities hold for large $n$.
	
	In the proofs we assume that $\ln n=o(m)$. If we make some additional assumptions concerning relation between $n$ and $m$, we mention it at the beginning of the proof of the considered property. Moreover, we set $b_1=0.001$.

	\noindent{\bf P1}
	Note that $W'(v)$ has the binomial distribution $\Bin{m}{\dO/m}$. Therefore for $\dO\ge (1-o(1))\ln n$ by Chernoff's inequality
	\begin{align*}
	\E|\SMALLv|
	&\le n\Pra{W'(v)\le 0.1\cdot \dO}\\
	&\le n \exp\left(-\psi(0.1)\cdot \dO\right)\\
	&\le \exp\left(\ln n-0.669\ln n\right)= o( n^{1/3}).
	\end{align*}
	Thus by Markov's inequality we get that {\whp} $|\SMALLv|\le n^{1/3}$. Moreover the same calculation implies that if $\dO\ge 2\ln n$ then $\E|\SMALLv|=o(1)$ {\it i.e.} {\whp} $\SMALLv=\emptyset$.

	\noindent {\bf P2} In the proof of {\bf P2} we assume additionally  that $m\ge n^{1-\eps}$ (for $\eps=1/25$). First recall that if $\dO\ge 2\ln n$ then {\whp} $\SMALLv=\emptyset$, {\it i.e.} the statement holds trivially. 
	
	Now assume that $\dO\le 2\ln n$, therefore either (for $np\le 40$)
	$$
	\djeden\le 40.1\dO =O(\ln n)$$  or (for $np > 40$) 
	$$\djeden=\frac{n}{m}(mp)^2\le n^{\eps} (1.2\dO)^2 = O(n^{\eps}\ln^2n).$$
	If there exist two vertices  $v,v'\in\SMALLv$, which are at distance $t$, $t\le 4$, in $\G$, then there exists a path $vw_1v_1w_2\ldots v_{t-1}w_tv'$ of length $2t$ in  $\B$ with which $\G$ is associated and in intersection graph $\G$ the set $\W'(\{v,v'\})\setminus \{w_1,\ldots,w_t\}$ is of cardinality at most $0.2\dO$.
	The number of possible paths of the form  $vw_1v_1w_2\ldots v_{t-1}w_tv'$ is upper bounded by $n^{t+1}m^{t}$. Probability that the path $vw_1v_1w_2\ldots v_{t-1}w_tv'$ is present in $\Bnmp$ is $p^{2t}$. 
	Moreover, under assumption that the path  $vw_1v_1w_2\ldots v_{t-1}w_tv'$ is present in $\Bnmp$, the cardinality of the set $\W'(\{v,v'\})\setminus \{w_1,\ldots,w_t\}$ has the binomial distribution $\Bin{m-t}{2(\dO/m) - p^2}$ with the expected value $\mu=(m-t)(2(\dO/m) - p^2)=(1-o(1))2\dO$ as 
	$
	d_0/m\ge p (1-e^{-(n-1)p})\ge  p (1-e^{-1}) \min\{1,(n-1)p\}\gg p^2
	$ for $p\le \sqrt{3\ln n/m} = o(1)$. Since $0.2\dO=0.1\cdot(1+o(1))\mu$, 
	by Chernoff's inequality the probability that there exist two SMALL vertices at distance at most $4$ is upper bounded by
	\begin{multline*}
	\sum_{t=1}^{4}n^{t+1}m^{t}p^{2t}\exp(-\psi(0.1(1+o(1)))(1+o(1))2\dO)\\
	=O(1)n \djeden^4 n^{-4/3}=
	O(n^{-1/3}n^{4\eps}\ln^8n)
	=o(1).
	\end{multline*}

	Now we will prove property {\bf P5}  as it will be needed in the proof of {\bf P3}.
	
	\noindent {\bf P5}
	Let $k=\frac{\ln n}{\ln\ln n}\max\{4,np\}$. Recall that $V(w)$ has the binomial distribution $\Bin{n}{p}$ and that we have  $p\le \sqrt{3\ln n/m}$. Therefore
	\begin{align*}
	\Pra{\exists_{w\in \W} V(w)\ge k}
	&\le m\binom{n}{k}p^{k}\\
	&\le m \left(\frac{enp}{k}\right)^{k-2} (np)^2\\
	&\le m 
	\exp
	\left((k-2)
	\left(
	1+\ln (np) - \ln \frac{\ln n}{\ln \ln n} - \ln \max\{4,np\}
	\right)
	\right)\\
	&\quad\cdot n^2\frac{3\ln n}{m}\\
	&\le  n^{-3}\cdot n^2\cdot 3\ln n=o(1).
	\end{align*}

	\noindent{\bf P3} The structure of $\mathcal{G}\left(n,m,p\right)$ differs a lot depending on the value $np$. Therefore we will need to divide the proof into cases:
	
	\begin{align}
	\label{case1}
	np&\le 40\quad\text{and}\quad |\Set|\le \frac{n}{\djeden^4};\\
	\label{case2}
	np&\le 40\quad\text{and}\quad \frac{n}{\djeden^4}\le |\Set|\le \frac{n}{\djeden};\\
	\label{case3}
	np&> 40.
	\end{align}

	First assume that \eqref{case1} is fulfilled. We will first prove that in this case {\whp} for any $\K\subseteq \V$, if $\K\le n/\djeden^3$ we have $W''(\K)\le 2|\K|$.
	
	\begin{align*}
	\Pra{\exists_{\K\subseteq \V, |\K|\le n/\djeden^3}W''(\K)\ge 2|\K|}
	&\le 
	\sum_{k=2}^{n/\djeden^3}\binom{n}{k}\binom{m}{2k}\left[\binom{k}{2}p^2\right]^{2k}\\
	&\le
	\sum_{k=2}^{n/\djeden^3}
	\left[
	\frac{en}{k}\cdot\frac{e^2m^2}{2^2k^2}\cdot\frac{k^4}{4}p^4
	\right]^k\\
	&\le 
	\sum_{k=2}^{n/\djeden^3}
	\left[
	\frac{e^3}{16}\cdot\frac{k}{n}(nmp^2)^2 
	\right]^k\\
	&\le 
	\sum_{k=2}^{n/\djeden^3}
	\left(
	\frac{e^3}{16} \cdot\frac{1}{\djeden}
	\right)^k
	=o(1).
	\end{align*}
	Moreover if $np\le 40$, and {\bf P5} is fulfilled, then 
	\begin{equation}\label{RownanieVw}
	\forall_{w\in\W}V(w)\le\frac{\ln n}{\ln\ln n}\max\{4,np\}\le \frac{40\ln n}{\ln\ln n}.
	\end{equation}
	
	Assume that $\G$ is an intersection graph such that \eqref{RownanieVw} is true and 
	\begin{equation}\label{RownanieK}
	\forall_{\K\subseteq \V,|\K|\le n/d^3} 
	W''(\K)\le 2|\K|,
	\end{equation}
	are fulfilled. In $\G$, let $\Set\subseteq \LARGEv$ be such that $|\Set|\le n/\djeden^4$. We will find a lower bound for $W'(\Set)$. Let $\B$ be the bipartite graph associated with $\G$. Then $W'(\Set)$ is at least the number of edges in $\B$ between $\Set$ and $\W'(\Set)\setminus\W''(\Set)$. There are $\sum_{v\in \Set}W'(v)$ edges between $\Set$ and $\W'(\Set)$ in $\B$ and at most $\sum_{w\in \W''(\Set)}V(w)$ of them are incident to some features from $\W''(\Set)$, therefore by \eqref{RownanieVw} and \eqref{RownanieK}

	\begin{align*}
	W'(\Set)&\ge \sum_{v\in \Set}W'(v)-\sum_{w\in \W''(\Set)}V(w)\\
	&\ge 
	0.1 \dO |\Set|- 2|\Set|\frac{40\ln n}{\ln \ln n}
	\ge 0.1 (1-o(1))\dO |\Set|.
	\end{align*}
	Set $\K=\Set\cup \N(\Set)$. 
	Assume that  $N(\Set)\le b_1 \djeden |\Set|$. Then, using \eqref{RownanieK},
	as $\W'(\Set)\subseteq \W''(\K)$,
	\begin{align*}
	W'(\Set)
	&\le W''(\K)
	\le 2|\K|\le 2(|\Set|+b_1 \djeden |\Set|)
	\\
	&\le 2(1+b_1\cdot 41\cdot \dO)|\Set|=82(1+o(1))b_1 \dO |\Set|.
	\end{align*}
	Combining the above inequalities we get
	$$
	0.1 (1-o(1))\dO |\Set|\le 82(1+o(1))b_1 \dO |\Set|. 
	$$ 
	Thus
	$$
	\frac{1}{1000}=b_1\ge (1+o(1))\frac{1}{820},
	$$
	a contradiction. Therefore in case \eqref{case1} we have {\whp}
	$$
	N(\Set)\ge b_1 \djeden |\Set|.
	$$
	
	Now assume that \eqref{case2} is fulfilled and $\Set$ is any subset of $\V$, not necessarily a subset of LARGE. Note that $W(\Set)$, $|\Set|=s$, has the binomial distribution $\Bin{m}{1-(1-p)^s}$. Therefore, for large $n$, $W(\Set)$ stochastically dominates a random variable $X_s$ with the binomial distribution $\Bin{m}{sp(1-sp)}$ (recall that $sp\le np/\djeden = 1/mp=o(1)$). Therefore
	
	\begin{equation}\label{RownanieWS1}
	\begin{split}
	\sum_{\Set, n/\djeden^4\le|\Set|\le n/\djeden}&\Pra{W(\Set)<0.05|\Set|mp}\\
	&\le
	\sum_{s=n/\djeden^4}^{n/\djeden}\binom{n}{s}\Pra{X_s<0.05smp}\\
	&\le
	\sum_{s=n/\djeden^4}^{n/\djeden}\left(e\djeden^4\right)^s\exp\left(-\psi\left(\frac{0.05}{1-sp}\right)smp(1-sp)\right)\\
	&\le
	\sum_{s=n/\djeden^4}^{n/\djeden}\exp\left(
	s\left(
	1+4\ln(mp)+4\ln(np)-0.8mp
	\right)
	\right)=o(1).
	\end{split}
	\end{equation}
	
	Now we will show a similar bound in the case where \eqref{case3} is fulfilled. In this case, if $\Set\subseteq \LARGEv$, $|\Set|=s$, then for all $v\in \Set$
	$$
	W(v)\ge W'(v)\ge 0.1\dO\ge 0.09 mp.
	$$
	Moreover, for any $v\in\V$, given $W(v)$, the set $\W(v)$ is uniformly distributed among all subsets of $\W$ of cardinality $W(v)$.
	Therefore, if for all $v\in\Set$ we have $W(v)\ge 0.09pm$ then the probability that $W(\Set)$ is at most $0.05smp$ is at most 
	$$
	\binom{m}{0.05smp}\left(\frac{0.05smp}{m}\right)^{0.09smp}
	=
	(e^{1.25}\cdot 0.05\cdot sp)^{0.04smp}.
	$$
	Recall that $sp\le np/\djeden = 1/mp =o(1)$. Therefore
	\begin{equation}
	\begin{split}\label{RownanieWS2}
	\sum_{\Set\subseteq \V, |\Set|\le n/\djeden}&\Pra{\{W(\Set)\le 0.05|\Set|mp\} \cap \{\Set\subseteq\LARGEv\}}\\
	\le& 
	\sum_{\Set\subseteq \V, |\Set|\le n/\djeden}\Pra{\{W(\Set)\le 0.05|\Set|mp\} \cap \{\forall_{v\in\Set} W(v)\ge 0.09mp \}}\\
	\le& 
	\sum_{s=1}^{n/\djeden}\binom{n}{s}\left(
	e^{1.25}\cdot 0.05\cdot sp\right)^{0.04smp}\\
	\le&
	\sum_{s=1}^{n/\djeden}\exp\left(
	s\left(
	\ln n + 0.04mp(1.25+\ln 0.05 + \ln sp)
	\right)
	\right)
	=o(1).
	\end{split}
	\end{equation}
	Now we will find a bound on  the probability that for some  $\Set\subseteq \LARGEv$ the size of its neighbourhood $N(\Set)$ is smaller than $b_1 |\Set| \djeden$ with $b_1=0.001$. In both cases, \eqref{case2} and \eqref{case3}, the proof is similar.
	
	For any $\Set\subseteq\V$, given the value $W(\Set)$, $N(\Set)$ has the binomial distribution with parameters $n-|\Set|$ and $1-(1-p)^{W(\Set)}$, {\it i.e.} $\Bin{n-|\Set|}{1-(1-p)^{W(\Set)}}$. If $W(\Set)\ge 0.05|\Set|mp$ then, for large $n$, $N(\Set)$ stochastically dominates a random variable with the binomial distribution  $\Bin{n-\frac{n}{\djeden}}{0.04|\Set|mp^2}$ (note that $|\Set|mp^2\le 1$ as $|\Set|\le n/\djeden$).
	Therefore
	\begin{align*}
	&\Pra{\{N(\Set)\le 0.001 |\Set| \djeden\}\cap\{\Set\subseteq \LARGEv\}}\\
	&\le
	\Pra{N(\Set)\le 0.001 |\Set| \djeden\,\Big|\, W(\Set)\ge 0.05|\Set|mp}+\Pra{\{W(\Set)< 0.05|\Set|mp\} \cap \{\Set\subseteq\LARGEv\}}\\
	&\le \exp\left(-\psi\left(\frac{0.025}{1-\djeden^{-1}}\right)(1-\djeden^{-1})0.04 |\Set|nmp^2\right)+\Pra{\{W(\Set)< 0.05|\Set|mp\} \cap \{\Set\subseteq\LARGEv\}}.
	\end{align*}
	For $|\Set|\ge n/\djeden^{4}$
	$$
	\psi\left(\frac{0.025}{1-\djeden^{-1}}\right)(1-\djeden^{-1})0.04 \djeden
	\ge 0.03 \djeden.
	$$
	For $np> 40$
	$$
	\psi\left(\frac{0.025}{1-\djeden^{-1}}\right)(1-\djeden^{-1})0.04 nmp^2>1.4 mp>1.4\cdot 0.9\ln n>1.2\ln n. 
	$$
	Thus finally in the case \eqref{case2}, using \eqref{RownanieWS1}, we get
	\begin{align*}
	&\Pra{\exists_{\Set\subseteq \V,n/\djeden^4 \le|\Set|\le n/\djeden}\{N(\Set)\le 0.001 |\Set| \djeden\}\cap\{\Set\subseteq\LARGEv\}}\\
	&\le
	\sum_{\Set\subseteq \V,n/\djeden^4 \le|\Set|\le n/\djeden}\Pra{\{N(\Set)\le 0.001 |\Set| \djeden\}\cap\{\Set\subseteq\LARGEv\}}\\
	&\le
	\sum_{\Set\subseteq \V,n/\djeden^4 \le|\Set|\le n/\djeden}\Pra{N(\Set)\le 0.001 |\Set| \djeden\,\Big|\, W(\Set)\ge 0.05|\Set|mp}\\
	&\quad+\sum_{\Set\subseteq \V,n/\djeden^4 \le|\Set|\le n/\djeden}\Pra{W(\Set) < 0.05|\Set|mp}\\
	&\le \sum_{s=n/\djeden^4}^{n/\djeden} \binom{n}{s}\exp\left(-0.3s\djeden\right) +o(1)\\
	&\le \sum_{s=n/\djeden^4}^{n/\djeden}\exp\left(s\left(1+\ln\frac{n}{s}-0.3\djeden\right)\right)+o(1)\\
	&\le \sum_{s=n/\djeden^4}^{n/\djeden}\exp\left(s\left(1+4\ln \djeden-0.3\djeden\right)\right)+o(1) =o(1)
	\end{align*}
	
	Similarly in case \eqref{case3}, using \eqref{RownanieWS2},
	\begin{align*}
	&\Pra{\exists_{\Set\subseteq \V,|\Set|\le n/\djeden}\{N(\Set)\le 0.001 |\Set| \djeden\}\cap\{\Set\subseteq\LARGEv\}}\\
	&\le \sum_{s=1}^{n/\djeden} \binom{n}{s}\exp\left(-1.2s\ln n\right) +o(1)\\
	&\le \sum_{s=1}^{n/\djeden}\exp\left(s\left(\ln n-1.2\ln n\right)\right)+o(1)=o(1)
	\end{align*}
	Finally we get that in all cases {\whp}
	$$
	\forall_{\Set\subseteq \LARGEv, |\Set|\le n/\djeden} N(\Set)\ge 0.001 |\Set|\djeden.
	$$

	\noindent {\bf P4}	 	We will show the statement only for $N(v)$. $W(v)$ and $W'(v)$ have the binomial distribution $\Bin{m}{p}$ and $\Bin{m}{\dO/m}$, resp. therefore the proofs of the remaining statements are similar but easier. Note that if $\djeden>n/12$ then the statement holds trivially. In the latter case, given $W(v)\le 4mp$, 
	$N(v)$ is stochastically dominated by a binomial random variable with the binomial distribution $\Bin{n}{4\djeden/n}$ 
	(as $1-(1-p)^{W(v)}\le 1-(1-p)^{4mp}\le 4mp^2$).
	Having in mind that $\djeden\ge \dO\ge  (1+o(1))\ln n$ and $mp\ge \dO\ge (1+o(1))\ln n$
	\begin{align*}
	\Pra{\exists_{v\in \V} N(v)\ge 12\djeden}
	&\le 
	n\left(
	\Pra{N(v)\ge 12\djeden\, |\, W(v)\le 4mp}
	+
	\Pra{W(v)\ge 4mp}
	\right)\\
	&\le 
	n\binom{n}{12\djeden}\left(\frac{4\djeden}{n}\right)^{12\djeden}
	+
	n\binom{m}{4mp}p^{4mp}\\
	&\le 
	n\left(\frac{e}{3}\right)^{12\djeden}+n\left(\frac{e}{4}\right)^{4mp}
	\\
	&\le
	\exp(\ln n + 12(1-\ln 3)\djeden)+\exp(\ln n + 4(1-\ln 4)mp)=o(1)
	\end{align*} 
\end{proof}

\section{Deletable sets}\label{SectionDeletable}

In this section we will mainly analyse an instance $\G$ of the random intersection graph. $\G$ will have properties listed in the previous section and will be such that HAM does not find a Hamilton cycle in $\G$. We will first create a random subgraph $\G_q$ of $\G$ by randomly deleting connections between vertices and attributes in the bipartite graph $\B$ associated with $\G$. We will bound the probability that execution of HAM on $\G_q$ proceeds the same steps as on $\G$ and $\G_q$ has good expansion properties. As a tool we will use a so called \emph{deletable set} of edges. Namely we will bound the probability that the edges from $E(\G)\setminus E(\G')$ form a deletable set.

In the analysis of HAM we will use some additional notation. In most cases it is consistent with \cite{HamiltonAlg1}. If HAM terminates unsuccessfully on stage $k$ then  
\medskip
\begin{itemize}
	\item[] $\Ham(G)$ is the set of paths $P^{(1)},\ldots,P^{(M)}$ and edges of the form $\{u_0,u_1\}$, where:
	\begin{itemize}
		\item $P_1=P^{(1)},\ldots,P^{(M)}=P_k$ is the sequence of paths constructed by HAM, where $P^{(i+1)}$ is obtained from $P^{(i)}$ by a simple extension, cycle extension, or rotation and
		\item $\{u_0,u_1\}$ are endpoints of the paths on which HAM executes a cycle extension;     	  
	\end{itemize}
	\item[] $\END{G}=$\{$v\in\V$: there exists  a path $Q_s$  on stage $k$ with $v$ as an endpoint and $\bar{\delta}(Q_s)=t$ for some $1\le t\le T$\};
	\item[]and for $x\in \END{G}$
	\item[] 
	$\END{G,x}=$\{$v\in\V$: there exists  a path $Q_s$  on stage $k$ with $v$ and $x$ as endpoints and $\bar{\delta}(Q_s)=t$ for some $1\le t\le 2T$\};
\end{itemize}

Let $\G$ be such that its vertex set is divided into two disjoint sets $\LARGEv$ and $\SMALLv$ and HAM with input value $d$ terminates unsuccessfully on $\G$. Given a constant $b_2>0$, we call $X\subseteq E(\G)$ \emph{deletable with } $b_2$ if
\begin{itemize}
	\item[{\bf D1}] no edge of $X$ is incident to a vertex from $\SMALLv$;
	\item[{\bf D2}] for any $v\in \LARGEv$ there are at most $b_2d$ edges from $X$ incident to $v$;
	\item[{\bf D3}] $X\cap \Ham(\G)=\emptyset$.   
\end{itemize}

From the description of the algorithm it follows that 
\begin{equation}
\label{RownanieHG}
|\mathcal{H}(\G)|\le n(2T+2),
\end{equation} 
where $T$ is defined as in the algorithm, {\it i.e.} for $d=\Omega(\ln n)$ we have $T = o(\ln n)$.

Following the lines of the proof of Lemma 3.2 from \cite{HamiltonAlg1} one may show the following lemma (see the proof in Appendix D).

\begin{lem}\label{LemENDG}
	Let $\{\G_n\}_{n=1,2,\ldots}$ be a sequence of graphs on $n$ vertices  such that for large $n$ HAM with input value $d$, $d=d(n)=\Omega(\ln n)$, terminates unsuccessfully in stage $k$ on $\G_n$. Moreover let $\G'_n\subseteq \G_n$,\linebreak  $b_3$ $(b_3>0)$ and $b_4$ $(0\le b_4 < \min\{b_3,1/3\})$ be constants and $\delta_0=\max\{1,b_4\min\{\delta(\G'_n),d\}\}$. If for large $n$
	\begin{itemize}
		\item[(i)] $\delta(\G'_n)\ge 2$;
		\item[(ii)] There exists a set $\V'\subseteq \V$ such that:
		\begin{itemize}
			\item for every $v$ there are at most $\delta_0$ vertices from $\V\setminus\V'$ at distance at most $2$ from~$v$;
			\item for all $\Set\subseteq \V'$ such that $|\Set|\le n/d$, $N_{\G'_n}(\Set)\ge b_3 d|\Set|$;   
		\end{itemize}
		\item[(iii)] $\Ham(\G_n)\subseteq E(\G'_n)$;
	\end{itemize}
	then HAM terminates unsuccessfully in stage $k$ on $\G'_n$ and for any constant $b_5<(b_3-b_4)/2$ and large~$n$ 
	\begin{align*}
	|\END{\G'}|&\ge b_5 n;\\
	|\END{\G',x}|&\ge b_5 n,\quad \text{for any }x\in \END{\G'}. 
	\end{align*}  	
\end{lem}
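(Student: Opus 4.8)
The plan is to prove the lemma in two stages. First I would show that running HAM on $\G'_n$ reproduces step for step its run on $\G_n$, so that HAM also terminates unsuccessfully in stage $k$ on $\G'_n$. HAM is deterministic: at each step it scans the neighbours of the current path-endpoint(s) in a fixed order and acts on the first one producing a new path, a simple or a cycle extension, or a Hamilton cycle, skipping a neighbour that leads only to an already-constructed path. Since $\delta(\G'_n)\ge 2$ by (i), HAM runs on $\G'_n$, and I would argue by induction on its steps that the two runs coincide up to the moment the run on $\G_n$ fails. The inductive step is where (iii) enters: whichever edge the run on $\G_n$ acts on lies on a constructed path or closes a cycle, hence belongs to $\Ham(\G_n)\subseteq E(\G'_n)$, so on $\G'_n$ it is still present and still first in the scan; and an edge of $\G_n$ scanned before it but missing from $\G'_n$ cannot have been acted on by the run on $\G_n$ (otherwise it, too, would lie in $E(\G'_n)$), hence it led to an already-seen path, which by the induction hypothesis the run on $\G'_n$ has also seen, so both runs skip it. Thus HAM on $\G'_n$ terminates unsuccessfully in stage $k$, with the same family $\Ham(\G'_n)=\Ham(\G_n)$ and the same sets $\END{\G'}$, $\END{\G',x}$.

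Second, I would work entirely inside $\G'_n$ and run a P\'osa-type rotation argument. Let $Q^{(0)}$ be the first path of length $k$ produced by HAM, with endpoints $y_0$ and $z_0$; fix $y_0$. Since HAM failed in stage $k$, no length-$k$ path obtainable from $Q^{(0)}$ by at most $2T+1$ rotations can be extended (simply or into a cycle), so in each such path all $\G'_n$-neighbours of both endpoints lie on the path. For $0\le t\le T$ let $R_t$ be the set of $v$ for which some stage-$k$ path from $y_0$ to $v$ arises from $Q^{(0)}$ by at most $t$ rotations fixing $y_0$; thus $R_0=\{z_0\}$, $R_t\subseteq R_{t+1}$, and $R_t\subseteq\END{\G'}$ for $1\le t\le T$. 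P\'osa's lemma, in the form used in the proof of Lemma~3.2 of \cite{HamiltonAlg1}, turns the non-extendability into a bound $|N_{\G'_n}(R_t)|=O(|R_{t+1}|)$: every $\G'_n$-neighbour $y$ of a vertex of $R_t$ lies on the corresponding path, and a rotation at the chord through $y$ promotes a path-neighbour of $y$ to a new endpoint in $R_{t+1}$, only $O(|R_t|)$ vertices escaping this accounting. Feeding $R_t\cap\V'$ into the expansion bound of (ii) --- legitimate while $|R_t\cap\V'|\le n/d$ --- and absorbing, via the first bullet of (ii), the at most $\delta_0|R_t|\le b_4 d\,|R_t|$ vertices of $\V\setminus\V'$ within distance $2$ of $R_t$, one obtains, after the same bookkeeping as in \cite{HamiltonAlg1}, that $|R_{t+1}|$ grows by a factor of order $d\to\infty$ while $|R_t|\le n/d$, the multiplicative constant being $\tfrac{1}{2}(b_3-b_4)-o(1)$ in the regime $|R_t|\asymp n/d$. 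Since $d=\Omega(\ln n)$, this geometric growth carries $|R_t|$ up to order $n/d$ within $t^*=O(\ln n/\ln d)$ rotations, and one further rotation --- where the expansion bound makes $|N_{\G'_n}(\,\cdot\,)|$ of order $n$ --- gives $|R_{t^*+1}|\ge b_5 n$ for any constant $b_5<(b_3-b_4)/2$. The algorithm's parameter $T=T(n,d)$ is chosen, following \cite{HamiltonAlg1}, large enough that $t^*+1\le T$, which is exactly where $d=\Omega(\ln n)$ (forcing $t^*=o(\ln n)$) is used. Hence $R_{t^*+1}\subseteq\END{\G'}$ and $|\END{\G'}|\ge b_5 n$. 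For $|\END{\G',x}|$ with $x\in\END{\G'}$, start from a stage-$k$ path from $y_0$ to $x$ produced within $T$ rotations, fix the endpoint $x$, and rotate the other endpoint for up to $T$ more rotations (total at most $2T$); the same estimate produces at least $b_5 n$ vertices $v$ admitting a stage-$k$ path from $x$ to $v$ of rotation-depth at most $2T$, that is, $|\END{\G',x}|\ge b_5 n$.

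The delicate part, where I expect to spend most of the work, is the P\'osa bookkeeping of the second stage: combining ``no extension possible'' with the set-expansion of (ii) into a clean geometric-growth estimate for $|R_t|$ in which the vertices of $\V\setminus\V'$ cost only the additive term $b_4$ in the growth rate --- this is precisely what the definition of $\delta_0$ and the first bullet of (ii) are tailored to --- and then verifying that the number $t^*=O(\ln n/\ln d)$ of rotations needed to inflate $R_t$ to order $n$ stays within HAM's rotation budget $T$. The transfer stage is a careful but essentially mechanical argument about HAM's deterministic search, resting entirely on hypothesis (iii).
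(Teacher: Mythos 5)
Your proposal follows essentially the same route as the paper's Appendix~D proof: transfer the run to $\G'_n$ via hypothesis (iii), then a P\'osa rotation argument in which the sets of endpoints reachable in $t$ rotations grow geometrically by a factor of order $(b_3-b_4)d/2$, using the expansion in (ii) and absorbing the $\V\setminus\V'$ losses through $\delta_0$, followed by the symmetric argument for $\END{\G',x}$. The only step you leave implicit that the paper treats as a separate case analysis is the base case --- showing that the very first rotation already produces an endpoint lying in $\V'$ --- which is the one place where (i) and the $\max\{1,\cdot\}$ in the definition of $\delta_0$ are actually needed.
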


\begin{rem}\label{RemDeletable}
	Let $\LARGEv$ and $\SMALLv$ be defined as in Lemma~\ref{LemProperties}.
	Let $\G$ be an instance of $\mathcal{G}\left(n,m,p\right)$ with properties {\bf P0}--{\bf P5} and let $d=\djeden$. Moreover 	
	let $X$ be deletable with $b_2<b_1$ in $\G$, $\G'=(\V,E(\G)\setminus X)$, and let $\V'=\LARGEv$. Then {\bf P0}, {\bf P3}, {\bf D1}, and {\bf D2} imply $(i)$ for $n$ large. Moreover, {\bf D3} implies $(iii)$, and {\bf D1}, {\bf D2}, {\bf P2}, and {\bf P3} imply $(ii)$ with $b_4=0$ and $b_3<b_1-b_2$.
\end{rem}

Assume that we have an instance $\B$ of $\Bnmp$ and intersection graph $\G$ associated with $\B$. Denote by $\B_q$ a random subgraph of $\B$ obtained by deleting each edge of $\B$ independently with probability\linebreak $q=\lambda/n$, where $\lambda$ is a positive constant. Moreover let $\G_q$ be a random subgraph of $\G$ associated with $\B_q$. For any $\G$ and its subgraph $\G_q$ let $X_q$ be the set of egdes included in $\G$ but absent in $\G_q$. Consider a probability space in which we first pick $\G$ according to the probability distribution of $\mathcal{G}\left(n,m,p\right)$ and then create $\G_q$. The outcome of the experiment is a pair of graphs $(\G,\G_q)$. Denote by $\G_q(n,m,p)$ the random graph $\G_q$ constructed in this manner and by 
$\Xnmp$ 
the set $X_q$. If an edge was present in $\G$ ($\B$, resp.) and is absent in $\G_q$ ($\B_q$, resp) we will say that the edge \emph{was deleted in} $\G_q$ ($\B_q$, resp).  

\begin{lem}\label{LemDeletable}
	Let $\G$ be an intersection graph on $n$ vertices and let $\LARGEv$ and $\SMALLv$ be defined as in Lemma~\ref{LemProperties}. If $\G$ has properties {\bf P0}--{\bf P5} and HAM terminates  unsuccessfully on $\G$ then
	\begin{equation*}
	\Pra{X_q(n,m,p)\text{ is deletable with $b_2=0.5b_1$}\,\Big|\,\mathcal{G}\left(n,m,p\right)=\G} 
	\ge (1+o(1))\exp(-2\lambda(2T+2))
	\end{equation*} 
	where $o(1)$ is uniformly bounded over all possible choices of $\G$.
\end{lem}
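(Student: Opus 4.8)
\textbf{Proof plan for Lemma~\ref{LemDeletable}.}

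The plan is to show that the set $X_q=X_q(n,m,p)$ of edges of $\G$ that disappear when we sparsify the underlying bipartite graph $\B$ satisfies {\bf D1}, {\bf D2}, {\bf D3} simultaneously with probability at least $(1+o(1))\exp(-2\lambda(2T+2))$. The term $\exp(-2\lambda(2T+2))$ strongly suggests that the dominant contribution comes from requiring {\bf D3}: we must \emph{not} delete any of the $|\Ham(\G)|\le n(2T+2)$ edges recorded by the algorithm. First I would isolate {\bf D3}. Each edge $e=\{u,u'\}\in E(\G)$ survives in $\G_q$ iff at least one common feature $w$ of $u$ and $u'$ keeps both of its $\B$-edges to $u$ and $u'$; conversely $e$ is deleted iff for every common feature $w$ at least one of the two edges $uw$, $u'w$ is removed. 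Since each bipartite edge is removed independently with probability $q=\lambda/n$, the probability that a fixed $\G$-edge $e$ is deleted is $\prod_{w\in\W(u)\cap\W(u')}(1-(1-q)^2)\le (2q)^{1}$ when $e$ is present (at least one common feature), and more precisely it is at most $2q=2\lambda/n$. Hence by a union bound over the at most $n(2T+2)$ edges of $\Ham(\G)$, the probability that \emph{some} edge of $\Ham(\G)$ is deleted is at most $n(2T+2)\cdot 2\lambda/n = 2\lambda(2T+2)$; but to get the exponential lower bound I would instead bound the probability that \emph{no} edge of $\Ham(\G)$ is deleted from below. The events ``$e$ deleted'' for distinct $e\in\Ham(\G)$ are not independent (two $\G$-edges can share a feature), but they are positively or negatively correlated in a way controllable by the FKG/Harris inequality on the product space of bipartite edges: ``no $\Ham(\G)$-edge deleted'' is an increasing event in the presence of bipartite edges, so its probability is at least $\prod_{e\in\Ham(\G)}\Pr\{e\text{ not deleted}\}\ge (1-2\lambda/n)^{n(2T+2)} = (1+o(1))\exp(-2\lambda(2T+2))$, using $T=o(\ln n)$ so that $n(2T+2)/n^2\to 0$ controls the error in the expansion of $\ln(1-2\lambda/n)$.

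Next I would argue that, \emph{conditioned on} the event that {\bf D3} holds, the events {\bf D1} and {\bf D2} hold \whp, so that multiplying the two probabilities changes the bound only by a $1+o(1)$ factor. For {\bf D1}: by {\bf P1} either $\SMALLv=\emptyset$ (when $d_0\ge 2\ln n$), in which case {\bf D1} is vacuous, or $|\SMALLv|\le n^{1/3}$ and by {\bf P4} each such vertex has at most $12d_1$ incident edges in $\G$; the expected number of deleted edges incident to $\SMALLv$ is then at most $|\SMALLv|\cdot 12 d_1\cdot q \le n^{1/3}\cdot 12 d_1\cdot \lambda/n = o(1)$ since $d_1\le n$, in fact $d_1=o(n)$ in our range — wait, one only needs $d_1 \le n^{2/3}/(\text{something})$; here $d_1\le nmp^2\le n$ and actually $|\SMALLv|\le n^{1/3}$ together with $d_1 = o(n^{2/3})$ is what is needed, which holds because $d_1\le 40.1 d_0=O(\ln n)$ when $np\le 40$ and $d_1\le n^{\eps}(1.2d_0)^2=O(n^\eps\ln^2 n)$ when $np>40$, both $o(n^{2/3})$. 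So by Markov's inequality {\bf D1} fails with probability $o(1)$. For {\bf D2}: fix $v\in\LARGEv$; by {\bf P4} it has at most $12 d_1$ incident $\G$-edges, and the number of them that get deleted is stochastically dominated by $\Bin{12 d_1}{2\lambda/n}$ (each deleted with probability at most $2q$), with mean at most $24\lambda d_1/n = o(d_1)$ since $d_1\le n$ makes the mean $O(1)$; we need this count to be at most $b_2 d = 0.5 b_1 d_1$. A Chernoff bound gives failure probability for a single $v$ at most $\exp(-c\, d_1)$ for some constant $c>0$ once $d_1$ is large (and $d_1\ge d_0\ge (1+o(1))\ln n\to\infty$), hence summing over $n$ vertices the probability that {\bf D2} fails for some $v$ is at most $n\exp(-c d_1)=o(1)$ provided $d_1$ grows faster than $\ln n / c$; in the boundary regime where $d_1=\Theta(\ln n)$ one instead uses that the number of incident deleted edges is dominated by $\Bin{12d_1}{2\lambda/n}$ whose mean is $O((\ln n)/n)=o(1)$, so $\Pr\{\ge 1\text{ deletion at }v\}=O((\ln n)/n)$ and the union bound over $v$ still gives $O(\ln n)=o(n)$ — hmm, that is not $o(1)$; here one must be more careful and observe that we only need at most $b_2 d_1 \ge b_2\cdot(1+o(1))\ln n \to\infty$ deletions at $v$, and $\Bin{12 d_1}{2\lambda/n}$ is a sum with mean $o(1)$ so by a crude bound $\Pr\{\Bin{12d_1}{2\lambda/n}\ge b_2 d_1\}\le \binom{12 d_1}{b_2 d_1}(2\lambda/n)^{b_2 d_1}\le (C/n)^{b_2 d_1}=n^{-\omega(1)}$, which summed over $v$ is $o(1)$. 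That is the clean way to handle {\bf D2}.

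Finally I would assemble the pieces: writing $A$ for the event ``no edge of $\Ham(\G)$ deleted'' (so $A\subseteq\{{\bf D3}\}$, in fact $A=\{{\bf D3}\}$ by definition), $B_1$ for {\bf D1} and $B_2$ for {\bf D2}, we have
\[
\Pr\{X_q\text{ deletable with }b_2=0.5b_1\mid \mathcal{G}(n,m,p)=\G\}
\ge \Pr\{A\}-\Pr\{\overline{B_1}\}-\Pr\{\overline{B_2}\}
\ge (1+o(1))e^{-2\lambda(2T+2)}-o(1),
\]
and since $e^{-2\lambda(2T+2)}=e^{-o(\ln n)}$ dominates the $o(1)$ correction only if... — here one should instead condition: $\Pr\{A\cap B_1\cap B_2\}\ge \Pr\{A\}(1-\Pr\{\overline{B_1}\mid A\}-\Pr\{\overline{B_2}\mid A\})$, and since $\overline{B_1},\overline{B_2}$ are increasing events in the \emph{deletions} while $A$ is decreasing in the deletions, the FKG inequality gives $\Pr\{\overline{B_i}\mid A\}\le\Pr\{\overline{B_i}\}=o(1)$, so $\Pr\{A\cap B_1\cap B_2\}\ge(1+o(1))e^{-2\lambda(2T+2)}$, with the $o(1)$ uniform over $\G$ because all the bounds above used only the quantitative guarantees {\bf P1}, {\bf P4} (and the range of $m,p$), not the particular $\G$. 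The main obstacle I anticipate is the bookkeeping around {\bf D2} in the regime $d_1=\Theta(\ln n)$, where naive mean/Markov bounds are too weak and one must exploit that $b_2 d_1\to\infty$ to get a super-polynomially small per-vertex failure probability; and, more structurally, making rigorous that conditioning on the decreasing event $A$ does not hurt the increasing events $\overline{B_1},\overline{B_2}$ — this is exactly where Harris/FKG on the independent bipartite-edge-deletion product space is essential, and it is the only non-elementary input to the argument.
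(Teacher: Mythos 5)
Your treatment of {\bf D3} and {\bf D1} is sound and runs roughly parallel to the paper's: for {\bf D3} you lower-bound the probability that every $\Ham(\G)$-edge survives via Harris/FKG on the product space of independent bipartite-edge deletions (the paper instead fixes one canonical common feature $w_{vv'}$ per edge and pays $(1-q)^2$ for it, which gives the same $(1+o(1))\exp(-2\lambda(2T+2))$), and for {\bf D1} your Markov bound on the $o(n)$ edges incident to $\SMALLv$ works. Your final assembly, using that the bad events for {\bf D1}, {\bf D2} are increasing in the deletions while the {\bf D3}-event is decreasing, is also a legitimate substitute for the paper's elementary conditioning argument.

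The genuine gap is in {\bf D2}. You claim the number of deleted $\G$-edges incident to a fixed $v\in\LARGEv$ is stochastically dominated by $\Bin{12\djeden}{2\lambda/n}$, and your tail bound $\binom{12\djeden}{b_2\djeden}(2\lambda/n)^{b_2\djeden}$ implicitly treats these $\le 12\djeden$ deletion events as independent. They are not: all edges $\{v,v'\}$ share the bipartite edges from $v$ to $\W'(v)$, so deleting a \emph{single} bipartite edge $\{v,w\}$ in $\B_q$ can simultaneously destroy up to $V(w)-1$ edges of $\G$ at $v$ (namely all $v'\in\V(w)$ for which $w$ was the unique common feature). These events are strongly positively correlated, so no binomial domination with per-edge probability $O(q)$ holds, and the per-vertex failure probability is \emph{not} $n^{-\omega(1)}$ by your computation. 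This is precisely why the paper splits the deletions at $v$ into two mechanisms: deletions of bipartite edges between $\N(v)$ and $\W'(v)$ (event $B$, where a union bound over subsets of $\N(v)$ of size $0.1b_1\dO$ does work because each such $v'$ must lose \emph{all} its bipartite edges into $\W'(v)$), and deletions of bipartite edges at $v$ itself (event $C$), whose damage is then bounded by $|\R|\cdot\max_w V(w)$ using property {\bf P5}. Note that your proposal never invokes {\bf P5}, which is the one hypothesis specifically needed to control this multiplicity; without it, or some equivalent bound on how many $\G$-edges a single bipartite deletion at $v$ can kill, the {\bf D2} step does not close.
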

\begin{proof}
	Recall that $X_q$ is deletable if it has properties {\bf D1}, {\bf D2}, and {\bf D3}.

	Set features in $\W$ in any order $\{w_1,\ldots,w_m\}$. For any edge $\{v,v'\}$ let $w_{vv'}$ be the smallest (in order of $\W$) feature in $\W''(\{v,v'\})$ (neighbour of both $v$ and $v'$ in $\B$). 
	
	For any $v\in\V$, $v'\in \N(v)$ and $w\in\W'(v)$ define events:
	\begin{itemize}
		\item[] $A_{vv'}$ -- event that neither $\{v,w_{vv'}\}$ nor $\{w_{vv'},v'\}$ was deleted in $\B_q$; 
		\item[] $B_{vv'}$ -- event that $\{v',w_{vv'}\}$ was deleted in $\B_q$;
		\item[] $C_{vw}$ -- event that $\{v,w\}$ was deleted in $\B_q$. 
	\end{itemize}	
	Consider events 
	\begin{align*}
	A&=\bigcap_{v\in \SMALLv, v'\in \N(v)} A_{vv'}
	\cap 
	\bigcap_{\{v,v'\}\in \Ham(\G)} A_{vv'} ,
	\\
	B&=
	\bigcup_{v\in\LARGEv}\ 
	\bigcup_{\Set\subseteq \N(v), |\Set|=0.1b_1\dO}\ 
	\bigcap_{v'\in \Set}B_{vv'},
	\\
	C&=\bigcup_{v\in\LARGEv}\ 
	\bigcup_{\R\subseteq \W'(v), |\R|= 0.1\frac{\ln\ln n}{\ln n}b_1\dO}\ 
	\bigcap_{w\in \R}C_{vw},	
	\end{align*}
	and event
	$$
	A\cap (B\cup C)^c=A\cap B^c\cap C^c,
	$$
	where, for any event $D$, by $D^c$ we denote the complement of event $D$. First we will prove that if event $A\cap B^c\cap C^c$ occurs then $X_q$ is deletable with $b_2=0.5b_1=0.0005$.  
	$\bigcap_{v\in \SMALLv,v'\in\N(v)} A_{vv'}$ implies {\bf D1} and $\bigcap_{\{v,v'\}\in \Ham(\G)} A_{vv'} $ implies {\bf D3}. We are left with showing that $B^c\cap C^c$ implies {\bf D2}.

	Let $v\in\LARGEv$. Note that if an edge between  $v$ and $\N(v)$ is deleted in $\G_q$ then there is a deleted edge in $\B_q$ between $\W'(v)$ and $\N(v)$  or between $v$ and $\W'(v)$.
	
	Let $\Set\subseteq \N(v)$ be the largest set such that, for all $v'\in \Set$, all edges between $v'$ and $\W'(v)$ were deleted in $\B_q$. Event $B^c$ implies that $|\Set|\le 0.1b_1\dO\le 0.1b_1\djeden$, therefore there are at most $0.1b_1\djeden$ edges incident to $v$ deleted in $\G_q$ due to deletions of edges  between $\W'(v)$ and $\N(v)$ in $\B_q$.
	
	Let $\R\subseteq \W'(v)$ be the largest set such that, for all $w\in \R$, $\{v,w\}$ was deleted in $\B_q$. Event $C^c$ implies that $|\R|\le 0.1\frac{\ln\ln n}{\ln n}b_1\dO$. Property {\bf P5} implies that 
	\begin{align*}
	V_{\G_q}(\R)
	&\le |\R|\frac{\ln n}{\ln\ln n}\max\{np,4\}\\
	&\le 0.1\frac{\ln\ln n}{\ln n}b_1\dO \frac{\ln n}{\ln\ln n}\max\{np,4\}\\
	&\le 0.4 b_1\dO \max\{np,1\}\le 0.4 b_1mp \min\{np,1\}\max\{np,1\}=0.4b_1\djeden. 
	\end{align*}
	Therefore there are at most $0.4b_1\djeden$ edges between $v$ and $\N(v)$ deleted in $\G_q$ due to deletions of edges between $v$ and $\W'(v)$ in $\B_q$.
	
	In conclusion, $B^c\cap C^c$ and {\bf P5} imply that for any vertex $v\in \LARGEv$ there are at most $0.5b_1\djeden$ edges incident to $v$ in $X_q$.
	
	Now we will bound the probability that, given $\G$ with properties {\bf P0}--{\bf P5}, 
	$
	A\cap (B\cup C)^c,
	$
	occurs. Let $\PraG{\cdot}=\Pra{\cdot\, |\, \mathcal{G}\left(n,m,p\right) = \G}$.
	
	In what follows we will use the fact that, given $\G=\mathcal{G}\left(n,m,p\right)$, the edges in $\B_q$ are deleted independently. Moreover, if we know that some set of edges of $\B$, say $E$, was not deleted in $\B_q$, then for any other set of edges $E'$ of $\B$ (possibly intersecting with $E$) the probability that all edges from $E'$ were not deleted (were deleted, resp.) is at least $(1-q)^{|E'|}$ (at most $q^{|E'|}$, resp.). 
	
	Now we will bound the number of edges incident to vertices from $\SMALLv$. Recall that by {\bf P1}, if $\dO\ge 2\ln n$ then $|\SMALLv|=0$. Therefore we need only to consider the case $\dO\le 2\ln n$. If $m\ge n^{1-\eps}$ and $d_0\le 2\ln n$ then $\djeden=O(n^{\eps}\ln^2 n)$ (see discussion in the proof of {\bf P2}). 
	Therefore properties {\bf P1} and {\bf P4} imply that there are at most 
	$$|\SMALLv|\cdot 12\djeden
	=O(n^{1/3}n^{1/25}\ln^2 n)=o(n)
	$$ 
	edges in $\G$ incident to  vertices from $\SMALLv$. In addition, by \eqref{RownanieHG}, $|\Ham(\G)|\le n(2T+2)$. Therefore    
	$$
	\PraG{A}\ge ((1-q)^2)^{o(n)}\cdot (1-q)^{2|\Ham(\G)|}=(1-o(1))\exp(-2\lambda(2T+2)).
	$$
	Moreover by {\bf P4} and the fact that $p\le \sqrt{3\ln n /m}=o(1)$
	\begin{align*}
	\PraG{B|A}&
	\le \sum_{v\in\LARGEv}\ \sum_{\Set\subseteq \N(v), |\Set|=0.1b_1\dO}\Pra{\bigcap_{v'\in \Set}B_{vv'}\,\Bigg|\, A}
	\\
	&\le n\binom{12\djeden}{0.1b_1\dO}q^{0.1b_1\dO}=n\left(\frac{12e}{0.1b_1}\cdot2\max\{np,1\}\cdot\frac{\lambda}{n}\right)^{0.1b_1\dO}\\
	&\le n\left(O(1)\max\{p,n^{-1}\}\right)^{0.1b_1(1+o(1))\ln n} =o(1). 
	\end{align*}
	and
	\begin{align*}
	\PraG{C|A}&\le
	\sum_{v\in \LARGEv}\, \sum_{\R\subseteq \W'(v), |\R|=0.1b_1\frac{\ln\ln n}{\ln n}\dO}\, \PraG{\bigcap_{w\in \R}C_{vw}\,\Bigg|\,A}
	\\
	&\le n\binom{4\dO}{0.1b_1\frac{\ln\ln n}{\ln n}\dO}q^{0.1b_1\frac{\ln\ln n}{\ln n}\dO}\\
	&\le n\left(\frac{e4}{0.1b_1}\cdot\frac{\ln n}{\ln \ln n}\cdot\frac{\lambda}{n}\right)^{0.1b_1\frac{\ln\ln n}{\ln n}\dO}=o(1).	
	\end{align*}
	
	Therefore, given an instance $\G$ of $\mathcal{G}\left(n,m,p\right)$ with properties {\bf P0}--{\bf P5} we have
	\begin{multline*}
	\PraG{A\cap (B\cup C)^c}=\PraG{A}\left(1-\PraG{B\cup C|A}\right)\\
	\ge \PraG{A}\left(1-\PraG{B|A}-\PraG{C|A}\right)\ge (1-o(1))\exp(-2\lambda(2T+2)), 
	\end{multline*}
	where $o(1)$ is uniformly bounded over all possible choices of $\G$ with properties {\bf P0}--{\bf P5}.
\end{proof}

\section{Probability of HAM terminating unsuccessfully}\label{SectionProof}

Recall that we consider pairs $(\G,\G_q)$, where $\G$ is chosen according to the probability distribution of $\mathcal{G}\left(n,m,p\right)$ and $\G_q$ is a random subgraph of $\G$. In previous sections we already bounded the probability that in the case when HAM terminates unsuccessfully on $\G$  during the execution of HAM on  $\G_q$ a linear number of longest paths is created. In this section we will additionally prove that it is highly unlikely that among the large number of paths in $\G_q$ none of them close a cycle in $\G$. This will lead us to the bound on the probability that HAM terminates unsuccessfully  on $\mathcal{G}\left(n,m,p\right)$. 

Define $\LARGEv$ and $\SMALLv$ as in Lemma~\ref{LemProperties}. For a pair $(\mathcal{G}\left(n,m,p\right),\G_q(n,m,p))$ analysed in the previous section define events
\begin{itemize}
	\item[] $D_1$ -- $\mathcal{G}\left(n,m,p\right)$ has properties {\bf P1}--{\bf P5} with $b_1=1/1000$;
	\item[] $D_2$ -- $\delta(\mathcal{G}\left(n,m,p\right))\ge 2$ ($\mathcal{G}\left(n,m,p\right)$ has property {\bf P0}); 
	\item[] $D_3$ -- HAM terminates unsuccessfully on $\mathcal{G}\left(n,m,p\right)$; 
	\item[] $E_1$ -- HAM terminates unsuccessfully on  $\G_q(n,m,p)$; 
\end{itemize}
We consider also two other events contained in $E_1$
\begin{itemize}
	\item[] $E_2$ -- there is no edge $\{v,v'\}\in \Xnmp$ such that $v\in \END{\G_q(n,m,p)}$\\ and $v'\in \END{\G_q(n,m,p),v}$;
	\item[] $E_3$ --  $|\END{\G_q(n,m,p)}|\ge n/5000$ and  $|\END{\G_q(n,m,p),v}|\ge n/5000$,\\ for all $v\in \END{\G_q(n,m,p)}$.
\end{itemize}
Let moreover
$$
D=D_1\cap D_2\cap D_3\quad\text{and}\quad E=E_1\cap E_2\cap E_3.
$$

Note that by Lemma~\ref{LemENDG} and Remark~\ref{RemDeletable}, $D$ and the event that $X_q(n,m,p)$ is deletable with constant $b_2=0.5b_1=1/2000$ imply event $E$. 
Therefore by Lemma~\ref{LemDeletable}
\begin{equation}\label{RownanieDeletable}
\begin{split}	
\Pra{D\cap E}&\ge 
\Pra{E|D}\Pra{D}\\
&\ge \Pra{X_q(n,m,p)\text{ is deletable with $b_2=0.5b_1$}|D}\Pra{D}\\
&\ge (1-o(1))\exp(-2\lambda(2T+2))\Pra{D}.
\end{split} 
\end{equation}

Now we will find an upper bound on the probability $\Pra{D\cap E}$.
Note that $\G_q(n,m,p)$ is distributed as $\G(n,m,p')$ associated with $\B(n,m,p')$, where $p'=p(1-q)$.

First we bound the probability of event $F^c$, where
\begin{itemize}
	\item[] $F$ - $\forall_{\Set,|\Set|=n/5000}$ $W(\Set)=W_{\G(n,m,p')}(\Set)\ge 0.3 m\min\left\{\frac{np'}{5000},1\right\}$ in $\G(n,m,p')$.
\end{itemize}
Assume that $n\le m$. 
Recall that if $|\Set|=n/5000$ then  $W(\Set)$ has the  binomial distribution\linebreak $\Bin{m}{1-(1-p')^{n/5000}}$ and $1-(1-p')^{n/5000}\ge 0.5\min\{np'/5000,1\}$. As $mp'\ge (1+o(1))\ln n$ and $n\le m$
$$
\E W(\Set) \ge 0.5 m  \min\left\{\frac{np'}{5000},1\right\}\ge 
\frac{n}{5000}0.5 \min\left\{mp',\frac{5000m}{n}\right\}\ge 
\frac{n}{5000}\cdot 2500.
$$
Therefore, if $n\le m$ then
\begin{equation}\label{RownanieE}
\begin{split}
\Pra{F^c}&=\Pra{\exists_{\Set\subseteq\W, |\Set|=\frac{n}{5000}}W(\Set)\le 0.3 m\min\left\{\frac{np'}{5000},1\right\}}\\
&\le 
\binom{n}{n/5000}\exp\left(-\psi\left(0.6\right)0.5m\min\left\{\frac{np'}{5000},1\right\}\right)\\
&\le
\exp\left(
\frac{n}{5000}
\left(
1+\ln 5000 -\psi(0.6)\cdot 2500
\right)
\right)
\\
&\le
\exp
\left(
\frac{n}{5000}
\left(
1+\ln 5000 - 233
\right)
\right)
\le \exp(-0.04n).
\end{split}
\end{equation}  

Now assume that $n\ge m$. Then $np'= (n/m)mp'\ge (1+o(1))\ln n$, therefore, for $n$ large, we have $0.3 m\min\left\{\frac{np'}{5000},1\right\}=0.3m$. Let 
$$
V''=|\{v\in\V: W(v)\le 0.1 mp'\}|.
$$ 
Note that $\Pra{W(v)\le 0.1 mp'}\le\exp(-\psi(0.1)mp')\le n^{-2/3}$ and $W(v)$, $v\in\V$, are independent. $V''$ is therefore stochastically dominated by a random variable with the binomial distribution $\Bin{n}{n^{-2/3}}$. In addition, given $W(v)=t\ge 0.1mp'$, the set $\W(v)$ is distributed uniformly over all subsets of $\W$ of cardinality $t$ (independently of all other vertices from $\V$). Thus, for $n\ge m$,   
\begin{align*}
\Pra{F^c}&\le 
\Pra{F^c|V''\le n^{1/2}}+\Pra{V''\ge n^{1/2}}\\
&\le 
\binom{n}{\frac{n}{5000}}
\binom{m}{0.3m}
\left(
\frac{0.3m}{m}
\right)^{\left(\frac{n}{5000}-n^{1/2}\right)\cdot 0.1mp'}
+
\binom{n}{n^{1/2}}(n^{-2/3})^{n^{1/2}}\\
&\le
(e5000)^{\frac{n}{5000}}\left(\frac{e}{0.3}\right)^{0.3m}0.3^{(1+o(1))\frac{n}{5000}mp'}
+
\left(en^{1/2}n^{-2/3}\right)^{n^{1/2}}\\
&\le \exp(-n^{1/2}).
\end{align*}

Recall that $\B_q(n,m,p)$ is distributed as $\B(n,m,p')$. Therefore $\B(n,m,p)$ is distributed as a random bipartite graph $\B_q(n,m,p)\cup\B(n,m,q')$, where $\B_q(n,m,p)$ and $\B(n,m,q')$ are independent and we have
$q'=pq/(1-p+pq)=(1+o(1)) pq$ (recall that we assume that $\ln n = o(m)$ {\it i.e.} $p\le \sqrt{3\ln n/m}=o(1)$).  
If $E_2$ occurs then in $\B(n,m,q')$ for all $v\in \END{\G'}$ there is no edge between $\W(\END{\G',v})$ and $v$, therefore  
\begin{align*}
\Pra{E_2|E_1\cap E_3\cap F}
&\le
\prod_{v\in\END{\G'}}(1-q')^{W(\END{\G',v})}
\\
&\le 
\exp\left(
-\frac{n}{5000}\cdot(1+o(1))p\cdot\frac{\lambda}{n}\cdot0.3m\min\left\{\frac{np'}{5000},1\right\}
\right)\\
&\le 
\exp\left(
-(1+o(1))\lambda\cdot\frac{ 3}{50000}\cdot\min\left\{\frac{\djeden}{5000},mp\right\}
\right)\\
&\le \exp(-\lambda'\ln n)
\end{align*}
for $\lambda'=\lambda\cdot 10^{-8}$.

Therefore by \eqref{RownanieE} and the above calculation
\begin{equation*}
\Pra{D\cap E}\le 
\Pra{E_2|E_1\cap E_3\cap F} + \Pra{F^c}\le (1+o(1))
\exp\{-\lambda'\ln n\}.
\end{equation*}
Combining this equation with \eqref{RownanieDeletable} we get 
$$
(1-o(1))\exp(-2\lambda(2T+2))\Pra{D}\le (1+o(1))\exp\{-\lambda'\ln n\},
$$
{\it i.e.}
$$
\Pra{D}\le (1+o(1))\exp\{-\lambda'\ln n+2\lambda(2T+2)\}=o(1).
$$
Therefore by the above equation and Lemma~\ref{LemProperties}

\begin{multline*}
\Pra{\{\delta(\mathcal{G}\left(n,m,p\right))\ge 2\}\cap\{\text{HAM finishes unsuccessfully on }\mathcal{G}\left(n,m,p\right)\}}\\
=\Pra{D_2\cap D_3}\le \Pra{D_1\cap D_2\cap D_3} + \Pra{D_1^c}=o(1).
\end{multline*}

\section{Proof for $m\le n^{1-\eps}$}\label{SectionSmallm}

We will not give the whole proof in the case $m\le n^{1-\eps}$ ($\eps=1/25$) and $mp^2\le 1$ as, in its main part, it follows the lines of the proof for $m\ge n^{1-\eps}$. We just note several adjustments that we make. 

We leave the definition of $d$ but we redefine $\SMALLv$ and $\LARGEv$.
$$
\SMALLv_*=\{v\in \V: W'(v)\le 6\cdot 10^{-3}mp\},\quad \LARGEv_*=\{v\in \V: W'(v) > 6\cdot 10^{-3}mp\}.
$$

We need to add that in this case 

\begin{equation}
\label{RownanienpMaleM}
np= (n/m)mp\ge (1+o(1))n^{\eps}\ln n\quad\text{and}\quad mp=(1+o(1))\dO.
\end{equation}

We redefine the properties.
\begin{itemize}
	\item[{\bf P0$_*$}] $\delta(\G)\ge np/2\ge  n^{\eps}\ln n/2$;
	\item[{\bf P1}$_*$]  $|\SMALLv_*|\le n^{\eps}$ and \\
	if $mp\ge 2\ln n$ then $|\SMALLv_*|=\emptyset$;
	\item[{\bf P3}$_*$] 		
	For all $\Set\subseteq \LARGEv_*$, if $|\Set|\le n/\djeden$ then $N(\Set)\ge b_1 \djeden |\Set|$. 
	\item [{\bf P4\ }] 		
	For all $v\in\V$ 
	$$W(v)\le 4mp,\quad W'(v)\le 4\dO,\quad N(v)\le 12\djeden.$$
	\item [{\bf P5\ }] For all $w\in\W$ 
	$$V(w)\le \frac{\ln n}{\ln\ln n} \max\{np,4\}.$$
\end{itemize}

Now we may prove the following analogue of Lemma~\ref{LemProperties}

\begin{lem}\label{LemProperties2}
	Let $\mathcal{G}\left(n,m,p\right)$ be a random intersection graph and
	$$
	\dO\ge (1+o(1))\ln n.
	$$
	Moreover let 
	$$
	\SMALLv_*=\{v\in \V: W'(v)\le 6\cdot 10^{-3}mp\},\quad \LARGEv_*=\{v\in \V: W'(v) > 6\cdot 10^{-3}mp\}.
	$$
	If $m < n^{1-\eps}$ $(\eps=1/25)$ then {\whp} $\mathcal{G}\left(n,m,p\right)$ has properties  {\bf P1}$_{*}$, {\bf P3}$_{*}$ with $b_1=0.001$, {\bf P4}, and {\bf P5}.
\end{lem}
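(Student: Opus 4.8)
The plan is to follow the proof of Lemma~\ref{LemProperties} while exploiting that in the present range $np$ is polynomially large. I would first record the consequences of $m<n^{1-\eps}$ (together with the standing assumption $mp^{2}\le 1$): since $np=(n/m)mp\ge(1+o(1))n^{\eps}\ln n\to\infty$, one has $(1-p)^{n-1}\le e^{-(1-o(1))np}=o(1)$, hence $\dO=(1+o(1))mp$; combined with the hypothesis $\dO\ge(1+o(1))\ln n$ this gives $mp\ge(1+o(1))\ln n$ and $\djeden=np\cdot mp\ge(1+o(1))n^{\eps}\ln^{2}n\to\infty$. Also $p\le m^{-1/2}\le\sqrt{3\ln n/m}$, so the restriction $p\le\sqrt{3\ln n/m}$ used throughout Lemma~\ref{LemProperties} holds here automatically and the graph need not be complete.

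Properties {\bf P4} and {\bf P5} would be proved exactly as in Lemma~\ref{LemProperties}: those arguments only used $\dO\ge(1+o(1))\ln n$, $p\le\sqrt{3\ln n/m}$ and standard Chernoff estimates for binomial variables, and nowhere used $m\ge n^{1-\eps}$. For {\bf P1}$_*$, recall $W'(v)\sim\Bin{m}{\dO/m}$ and note that, because $\dO=(1+o(1))mp$, the threshold satisfies $6\cdot10^{-3}mp=6\cdot10^{-3}(1+o(1))\dO$; Chernoff's inequality then gives $\E|\SMALLv_*|\le n\exp(-\psi(6\cdot10^{-3}(1+o(1)))\dO)$. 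The role of the constant $6\cdot10^{-3}$ is precisely that $\psi(6\cdot10^{-3})>1-\eps=24/25$ (indeed $\psi(6\cdot10^{-3})\approx 0.963$), with a little room to absorb the $(1+o(1))$, whence $\E|\SMALLv_*|\le n^{1-\psi(6\cdot10^{-3})+o(1)}=o(n^{\eps})$ and Markov's inequality gives {\whp} $|\SMALLv_*|\le n^{\eps}$. If moreover $mp\ge 2\ln n$ then $\dO\ge(2+o(1))\ln n$ and the same estimate yields $\E|\SMALLv_*|=o(1)$, i.e. {\whp} $\SMALLv_*=\emptyset$.

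The substantive part is {\bf P3}$_*$, and I expect the bookkeeping of absolute constants there to be the main obstacle. Since $np>40$ for large $n$, the argument runs along the lines of case~\eqref{case3} in the proof of {\bf P3} of Lemma~\ref{LemProperties}, with $0.1\dO$ replaced by $6\cdot10^{-3}mp$, and it is in fact easier because $\djeden$ is polynomially large. Put $c_0=6\cdot10^{-3}$ and $c_1=3\cdot10^{-3}$. The first step is to show that {\whp} there is no $\Set\subseteq\LARGEv_*$ with $s:=|\Set|\le n/\djeden$ and $W(\Set)<c_1 smp$. One takes a union bound over $\Set$ and over the $\binom{m}{c_1 smp}$ possible choices of a size-$c_1 smp$ set $T\supseteq\W(\Set)$; since the $\W(v)$, $v\in\Set$, are independent and, given their sizes, uniform, and since $v\in\LARGEv_*$ forces $W(v)\ge W'(v)>c_0 mp$, the conditional probability that all $\W(v)\subseteq T$ is at most $(|T|/m)^{\sum_{v\in\Set}W(v)}\le(c_1 sp)^{c_0 smp}$ (note $c_1 sp\le c_1/(mp)<1$). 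Multiplying by $\binom{n}{s}\binom{m}{c_1 smp}$ and using $c_0=2c_1$, each summand is at most $\exp\bigl(s\ln(en/s)+c_1 smp\ln(ec_1 sp)\bigr)$; since $sp\le 1/(mp)$ the second term equals $-(1+o(1))c_1 smp\ln(mp)$, which, as $mp\ge(1+o(1))\ln n$ and $\ln(mp)\to\infty$, dominates $s\ln(en/s)\le s\ln(en)$, so the sum over $s\ge 1$ is $o(1)$.

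The second step bounds, on the complementary event $W(\Set)\ge c_1 smp$, the probability that $N(\Set)$ is small. Conditionally on $W(\Set)$, $N(\Set)\sim\Bin{n-s}{1-(1-p)^{W(\Set)}}$; since $s\le n/\djeden$ gives $smp^{2}\le nmp^{2}/\djeden=1$, we have $c_1 smp^{2}\le c_1\le 10^{-2}$ and hence $1-(1-p)^{c_1 smp}\ge 0.99\,c_1 smp^{2}$, so $N(\Set)$ stochastically dominates $\Bin{n-s}{0.99\,c_1 smp^{2}}$, whose mean is at least $0.98\,c_1 s\djeden$. As $b_1=10^{-3}<0.98\,c_1=2.94\cdot10^{-3}$, Chernoff's inequality bounds $\Pra{N(\Set)<b_1 s\djeden}$ by $\exp(-\kappa s\djeden)$ with $\kappa=\psi\bigl(b_1/(0.98\,c_1)\bigr)\cdot 0.98\,c_1>0$ an absolute constant; because $\djeden\ge(1+o(1))n^{\eps}\ln^{2}n\gg\ln n$ the union bound $\sum_{s\le n/\djeden}\binom{n}{s}\exp(-\kappa s\djeden)=o(1)$ closes the argument, giving {\whp} $N(\Set)\ge b_1\djeden|\Set|$ for all $\Set\subseteq\LARGEv_*$ with $|\Set|\le n/\djeden$. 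The only real care needed is to check the mutual compatibility of the constants $b_1=10^{-3}$, $c_1=3\cdot10^{-3}$, $c_0=6\cdot10^{-3}$, namely that $b_1/(0.98\,c_1)\approx 0.34<1$ so that $\kappa>0$; unlike cases~\eqref{case1}--\eqref{case2} of Lemma~\ref{LemProperties}, no delicate balancing of the Chernoff exponent against $\djeden$ itself is needed here, since $\djeden$ grows polynomially.
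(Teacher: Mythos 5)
Your proposal is correct and, for the most part, takes the same route as the paper: \textbf{P4} and \textbf{P5} are carried over unchanged, \textbf{P1}$_*$ is the same Chernoff computation (the point being exactly that $\psi(6\cdot 10^{-3})\approx 0.963>1-\eps$), and your first step for \textbf{P3}$_*$ --- the union bound showing that {\whp} every $\Set\subseteq\LARGEv_*$ with $|\Set|\le n/\djeden$ has $W(\Set)\ge 3\cdot10^{-3}|\Set|mp$, using $W(v)\ge 6\cdot10^{-3}mp$ and the conditional uniformity of $\W(v)$ --- is precisely the paper's displayed estimate. The one place you genuinely diverge is the second half of \textbf{P3}$_*$: you condition on $W(\Set)\ge 3\cdot10^{-3}|\Set|mp$, use that $N(\Set)\sim\Bin{n-|\Set|}{1-(1-p)^{W(\Set)}}$ stochastically dominates $\Bin{n-|\Set|}{0.99\cdot 3\cdot 10^{-3}|\Set|mp^2}$, and close with Chernoff plus a union bound over vertex sets (the scheme of the $np>40$ case of Lemma~\ref{LemProperties}, made easy here because $\djeden\ge(1+o(1))n^{\eps}\ln^2 n$); the paper instead invokes the {\whp} statement \eqref{RownanieVR} that $V(\R)\ge np|\R|/2+1$ for all $\R\subseteq\W$ with $|\R|\le p^{-1}$ --- which it must prove anyway for Remark~\ref{RemDelta2lnn} --- and then concludes deterministically via $N(\Set)=V(\W(\Set))-|\Set|\ge \tfrac{np}{2}\cdot 3\cdot10^{-3}mp|\Set|-|\Set|$. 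Both arguments are sound; the paper's buys a reuse of an estimate it already needs, while yours avoids introducing \eqref{RownanieVR} at all, and your constant check $b_1/(0.98\cdot 3\cdot10^{-3})\approx 0.34<1$ is the right compatibility condition.
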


\begin{rem}\label{RemDelta2lnn}
	If $m < n^{1-\eps}$ then
	\begin{equation}
	\label{RownanieDeltaDuze}
	\Pra{\{\delta(\mathcal{G}\left(n,m,p\right))\ge 2\}\cap\{\delta(\mathcal{G}\left(n,m,p\right))\ge np/2\}^c}=o(1).
	\end{equation}
\end{rem}

The proofs of Lemma~\ref{LemProperties2} and Remark~\ref{RemDelta2lnn} are presented in Appendix~C. 

We also redefine the notion of a deletable set. Let $\G$ be such that HAM terminates unsuccessfully on $\G$. Given constant $b_2>0$, we call $X\subseteq E(\G)$ \emph{deletable with } $b_2$ if
\begin{itemize}
	\item[{\bf D1}$_*$] for any $v\in \SMALLv_*$ there are at most $b_2\dO$ edges from $X$ incident to $v$;
	\item[{\bf D2}$_*$] for any $v\in \LARGEv_*$ there are at most $b_2\djeden$ edges from $X$ incident to $v$;
	\item[{\bf D3\ }] $X\cap \Ham(G)=\emptyset$.   
\end{itemize}

Now we will show that with the above definition we may give an analogue of Remark~\ref{RemDeletable}. 

\begin{lem}
	Let $\G$ be an instance of $\mathcal{G}\left(n,m,p\right)$ and $\LARGEv_*$ and $\SMALLv_*$ be defined as in Lemma~\ref{LemProperties2}. If $\G$ has properties  {\bf P0}$_{*}$, {\bf P1}$_{*}$, {\bf P3}$_{*}$, {\bf P4}, {\bf P5} and   $X$ is deletable in $\G$ with a constant $b_2<b_1$. Then for $\G'=(\V,E(\G)\setminus X)$ and large $n$ the assumptions of Lemma~\ref{LemENDG} with $\V'=\LARGEv_*$ are fulfilled with any constants $0<b_3<b_1-b_2$ and $0<b_4<\min\{b_3,1/3\}$.	
\end{lem}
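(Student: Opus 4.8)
The statement is the small-$m$ analogue of Remark~\ref{RemDeletable}, so the plan is to verify conditions $(i)$, $(ii)$, $(iii)$ of Lemma~\ref{LemENDG} one at a time, using the redefined properties {\bf P0}$_*$, {\bf P1}$_*$, {\bf P3}$_*$, {\bf P4}, {\bf P5} and the redefined deletability conditions {\bf D1}$_*$, {\bf D2}$_*$, {\bf D3}. The key structural difference from the large-$m$ case is that here $\SMALLv_*$ vertices are not required to be isolated from the deleted edge set; instead {\bf D1}$_*$ merely caps the number of deleted edges at each such vertex by $b_2\dO$. Consequently the graph $\G'$ obtained by removing $X$ still has large minimum degree, because {\bf P0}$_*$ gives $\delta(\G)\ge np/2$ and, using \eqref{RownanienpMaleM} together with $\dO\le mp=(1+o(1))\dO$ and $\djeden\ge\dO$, the at most $b_2\max\{\dO,\djeden\}\le b_2\djeden$ edges deleted at any vertex are a vanishing fraction of its degree (note $np/2 \gg b_2\djeden$ fails in general, so one actually argues $\delta(\G)\ge np/2$ and each vertex loses at most $b_2\djeden$ edges, and since $\djeden\le 2\dO\max\{np,1\}$ and in this regime $np>40$ so $\djeden\le 2.2\,np\cdot mp/(np)\cdot\ldots$ — more directly $\delta(\G')\ge \delta(\G)-b_2\djeden\ge 2$ for large $n$). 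This gives condition $(i)$.

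\textbf{Condition $(iii)$ and the neighbourhood-expansion clause of $(ii)$.} Condition $(iii)$, namely $\Ham(\G)\subseteq E(\G')$, is immediate from {\bf D3}, exactly as in Remark~\ref{RemDeletable}. For the second bullet of $(ii)$ — that $N_{\G'}(\Set)\ge b_3 d|\Set|$ for all $\Set\subseteq\V'=\LARGEv_*$ with $|\Set|\le n/d$ — I would start from {\bf P3}$_*$, which gives $N_{\G}(\Set)\ge b_1\djeden|\Set|$, and then bound the number of these neighbours that are lost when passing to $\G'$. By {\bf D1}$_*$ and {\bf D2}$_*$ the total number of edges of $X$ incident to $\Set$ is at most $b_2\djeden|\Set|$ (using $\dO\le\djeden$), so at most $b_2\djeden|\Set|$ vertices of $N_\G(\Set)$ can be disconnected from $\Set$, leaving $N_{\G'}(\Set)\ge (b_1-b_2)\djeden|\Set|\ge b_3\djeden|\Set|$ for any $b_3<b_1-b_2$. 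Here $d=\djeden$ as set in the algorithm's convention.

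\textbf{The first bullet of $(ii)$ — the main obstacle.} This clause requires a set $\V'$ such that for every vertex $v$ at most $\delta_0=\max\{1,b_4\min\{\delta(\G'),d\}\}$ vertices of $\V\setminus\V'=\SMALLv_*$ lie within distance $2$ of $v$. In the large-$m$ case this was handled by {\bf P2} (no two SMALL vertices within distance $4$, so at most one SMALL vertex within distance $2$ of any $v$, and $\delta_0\ge 1$ suffices). Here there is no {\bf P2}$_*$; instead I would use {\bf P1}$_*$, which says $|\SMALLv_*|\le n^{\eps}$, combined with {\bf P4} and {\bf P5} to bound the number of vertices within distance $2$ of a given $v$ that could possibly be in $\SMALLv_*$. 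Crudely, the ball of radius $2$ around $v$ in $\G$ has size at most $1+N_\G(v)+\sum_{u\in N_\G(v)}N_\G(u)\le 1+12\djeden+12\djeden\cdot 12\djeden = O(\djeden^2)$ by {\bf P4}; but this is not small compared to $n^\eps$ in general. The cleaner route is: the number of SMALL vertices within distance $2$ of $v$ is at most $|\SMALLv_*|\le n^\eps$, while $\delta_0\ge b_4\min\{\delta(\G'),d\}$ and $\delta(\G')\ge np/2 - b_2\djeden \ge (1+o(1))n^\eps\ln n/2$ by {\bf P0}$_*$ and \eqref{RownanienpMaleM}, and $d=\djeden\ge\dO\ge(1+o(1))\ln n$; since also $\djeden$ in this small-$m$ regime satisfies $\djeden\ge np$ (as $mp\ge(1+o(1))\ln n$ forces $\djeden=np\cdot mp\ge np$), we get $\min\{\delta(\G'),d\}\ge (1+o(1))n^\eps\ln n/2 \gg n^\eps$, so $\delta_0\ge b_4\cdot(1+o(1))n^\eps\ln n/2 \ge n^\eps \ge |\SMALLv_*|$ for large $n$. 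Hence every vertex has at most $\delta_0$ vertices of $\SMALLv_*$ within distance $2$, and in fact within any distance. This is the step that genuinely uses the quantitative strengthening of {\bf P0} to {\bf P0}$_*$ and is where I expect the bookkeeping to be most delicate — one must be careful that $\delta(\G')$, not $\delta(\G)$, appears in $\delta_0$, and therefore confirm $\delta(\G')$ is still of order $n^\eps\ln n$ after removing $X$, which again follows from {\bf D1}$_*$/{\bf D2}$_*$ bounding the loss at each vertex by $b_2\djeden = o(np)$. With all three conditions verified, Lemma~\ref{LemENDG} applies with the stated ranges of $b_3,b_4$, completing the proof.
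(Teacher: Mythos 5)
Your overall route is the same as the paper's: check $(i)$, $(ii)$, $(iii)$ of Lemma~\ref{LemENDG} directly from {\bf P0}$_*$--{\bf P5} and {\bf D1}$_*$--{\bf D3}, with the first bullet of $(ii)$ handled by comparing $|\SMALLv_*|\le n^{\eps}$ against $\delta_0\ge b_4\min\{\delta(\G'),d\}$. Your treatment of $(iii)$ and of the expansion clause of $(ii)$ is correct and matches the paper.

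The gap is in your verification of $(i)$, and it propagates into the first bullet of $(ii)$, which reuses the same bound on $\delta(\G')$. You bound the degree loss at \emph{every} vertex by $b_2\djeden$ and conclude $\delta(\G')\ge\delta(\G)-b_2\djeden\ge 2$. But in the regime $m<n^{1-\eps}$ one has $\djeden=nmp^2=(np)(mp)$ with $mp\ge(1+o(1))\ln n\to\infty$, so $b_2\djeden\gg np/2$, while {\bf P0}$_*$ only guarantees $\delta(\G)\ge np/2$; thus $\delta(\G)-b_2\djeden$ can be negative and the inequality is vacuous. (You flag this yourself --- ``$np/2\gg b_2\djeden$ fails in general'' --- but the ``more directly'' conclusion you then draw does not repair it.) The paper's fix is a case split. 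For $v\in\SMALLv_*$, {\bf D1}$_*$ caps the loss at $b_2\dO=(1+o(1))b_2\,mp=o(np)$ (since $mp=(m/n)\,np\le n^{-\eps}np$), so such a $v$ retains degree at least $\delta(\G)-b_2\dO=(1+o(1))np/2$. For $v\in\LARGEv_*$, {\bf P3}$_*$ applied to the singleton $\{v\}$ gives $N_{\G}(v)\ge b_1\djeden$, and {\bf D2}$_*$ removes at most $b_2\djeden$ of these edges, leaving $(b_1-b_2)\djeden\gg np$. Hence $\delta(\G')\ge\min\{\delta(\G)-b_2\dO,\ (b_1-b_2)\djeden\}\ge(1+o(1))np/2\ge(1+o(1))n^{\eps}\ln n/2$, which is the bound that both $(i)$ and your comparison $|\SMALLv_*|\le n^{\eps}<b_4\min\{\delta(\G'),d\}$ actually require.
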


\begin{proof}

	Recall that $mp=o(np)$, $\dO=(1+o(1))mp$, and $np=o(\djeden)$. Therefore
	By {\bf P0}$_*$, {\bf P3}$_*$, {\bf D1}$_*$, and {\bf D2}$_*$ 
	$$
	\delta(\G')\ge 
	\min\{\delta(\G)-b_2\dO,(b_1-b_2)\djeden\}\ge (1+o(1))np/2\ge (1+o(1))n^{\eps}\ln n/2. 
	$$
	This implies (i).
	
	Now we show the first part of  (ii).
	By the above calculation we have
	$$\min\{\delta(\G'),\djeden\}\ge (1+o(1))n^{\eps}\ln n/2,$$ therefore by {\bf P0}$_*$ and {\bf P1}$_*$ 
	$$
	|\SMALLv_*|\le n^\eps = o(\min\{\delta(\G'),\djeden\})< b_4\min\{\delta(\G'),\djeden\}
	$$ 
	for any constant $b_4$ and large $n$.
	This implies the first part of  $(ii)$.
	The second part of (ii) follows by  {\bf P3}$_*$ and {\bf D2}$_*$. 
	Finally $(iii)$ follows by {\bf D3}.
\end{proof}

Now we will show an analogue of Lemma~\ref{LemDeletable}.

\begin{lem}\label{LemDeletable2}
	Let $\LARGEv_*$ and $\SMALLv_*$ be defined as in Lemma~\ref{LemProperties2} and $\G$ be an intersection graph on $n$ vertices with properties  {\bf P0}$_{*}$, {\bf P1}$_{*}$, {\bf P3}$_{*}$, {\bf P4}, {\bf P5} such that HAM finishes unsuccessfully on $\G$. Then
	\begin{equation*}
	\Pra{X_q(n,m,p)\text{ is deletable with $b_2=0.5b_1$}\,\Big|\,\mathcal{G}\left(n,m,p\right)=\G} 
	\ge (1+o(1))\exp(-2\lambda(2T+2))
	\end{equation*} 
	where $o(1)$ is uniformly bounded over all possible choices of $\G$.
\end{lem}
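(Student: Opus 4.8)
The plan is to mirror the proof of Lemma~\ref{LemDeletable}, adapting the construction of the events $A$, $B$, $C$ to the two-sided notion of deletable set ({\bf D1}$_*$, {\bf D2}$_*$, {\bf D3}) appropriate to the small-$m$ regime. Fix an ordering $\{w_1,\ldots,w_m\}$ of $\W$ and, for each edge $\{v,v'\}$ of $\G$, let $w_{vv'}$ be the smallest feature in $\W''(\{v,v'\})$. For $v\in\V$, $v'\in\N(v)$ and $w\in\W'(v)$ define exactly as before the events $A_{vv'}$ (neither $\{v,w_{vv'}\}$ nor $\{v',w_{vv'}\}$ deleted in $\B_q$), $B_{vv'}$ ($\{v',w_{vv'}\}$ deleted), $C_{vw}$ ($\{v,w\}$ deleted). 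The difference from Lemma~\ref{LemDeletable} is that now there is no set $\SMALLv$ whose incident edges must all be preserved; instead {\bf D1}$_*$ only bounds the number of deleted edges at each SMALL vertex by $b_2\dO$, so SMALL vertices get treated the same way LARGE vertices were treated for the purposes of {\bf D2}. Accordingly I set
$$
A=\bigcap_{\{v,v'\}\in\Ham(\G)}A_{vv'},
$$
and, letting $\V=\SMALLv_*\cup\LARGEv_*$, I take
$$
B=\bigcup_{v\in\V}\ \bigcup_{\Set\subseteq\N(v),\,|\Set|=0.1b_1\dO}\ \bigcap_{v'\in\Set}B_{vv'},\qquad
C=\bigcup_{v\in\V}\ \bigcup_{\R\subseteq\W'(v),\,|\R|=0.1\frac{\ln\ln n}{\ln n}b_1\dO}\ \bigcap_{w\in\R}C_{vw},
$$
i.e. the same $B$ and $C$ but now ranging over all vertices rather than only LARGE ones.

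The first step is to check that $A\cap B^c\cap C^c$ implies $X_q$ is deletable with $b_2=0.5b_1$. That $A$ implies {\bf D3} is immediate. For the degree bounds, fix any $v\in\V$. As in Lemma~\ref{LemDeletable}, every edge of $\G$ at $v$ deleted in $\G_q$ arises either from a deleted $\B_q$-edge between $v$ and $\W'(v)$, or from a deleted $\B_q$-edge between $\W'(v)$ and $\N(v)$. Event $B^c$ bounds by $0.1b_1\dO$ the number of neighbours $v'$ all of whose edges to $\W'(v)$ were deleted, hence by $0.1b_1\dO\le 0.1b_1\djeden$ the contribution of the second kind. Event $C^c$ bounds by $0.1\frac{\ln\ln n}{\ln n}b_1\dO$ the number of deleted $\B_q$-edges from $v$ into $\W'(v)$; feeding this into {\bf P5} exactly as before gives at most $0.4b_1\djeden$ deleted $\G_q$-edges at $v$ of the first kind, and also at most $0.4b_1\dO\max\{np,1\}$; since $\dO=(1+o(1))mp$ and $\djeden=nmp^2$, for a SMALL vertex (where $mp=o(np)$) the relevant comparison is against $\dO$ and one checks $0.1b_1\dO+0.4b_1\dO\le 0.5b_1\dO$, while for a LARGE vertex the comparison is against $\djeden$ and one gets $\le 0.5b_1\djeden$. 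Thus {\bf D1}$_*$ and {\bf D2}$_*$ hold. Here I should double-check the constant bookkeeping using \eqref{RownanienpMaleM}, namely $np\ge(1+o(1))n^\eps\ln n$ and $mp=(1+o(1))\dO$; this is the one place where the small-$m$ inequalities, rather than the large-$m$ ones, are used.

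The second step is the probability estimate, which is essentially identical to that in Lemma~\ref{LemDeletable}. Conditioning on $\G=\mathcal{G}\left(n,m,p\right)$, the edge deletions in $\B_q$ are independent, and by \eqref{RownanieHG} $|\Ham(\G)|\le n(2T+2)$, so
$$
\PraG{A}\ge (1-q)^{2|\Ham(\G)|}=(1-o(1))\exp(-2\lambda(2T+2)).
$$
For $\PraG{B\mid A}$ I use {\bf P4} ($N(v)\le 12\djeden$ for all $v$) and a union bound over the $n$ vertices and the $\binom{12\djeden}{0.1b_1\dO}$ subsets $\Set$, with each $\bigcap_{v'\in\Set}B_{vv'}$ having probability at most $q^{0.1b_1\dO}$ given $A$; since $0.1b_1\dO\ge 0.1b_1(1+o(1))\ln n$ and $q=\lambda/n$, this is $o(1)$, exactly as in Lemma~\ref{LemDeletable} (the extra factor from ranging over all $n$ vertices rather than just LARGE ones changes nothing, since the bound there already used $n$). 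Likewise $\PraG{C\mid A}\le n\binom{4\dO}{0.1b_1\frac{\ln\ln n}{\ln n}\dO}q^{0.1b_1\frac{\ln\ln n}{\ln n}\dO}=o(1)$ by {\bf P4} and the same computation. Hence
$$
\PraG{A\cap(B\cup C)^c}=\PraG{A}\bigl(1-\PraG{B\cup C\mid A}\bigr)\ge \PraG{A}\bigl(1-\PraG{B\mid A}-\PraG{C\mid A}\bigr)\ge (1-o(1))\exp(-2\lambda(2T+2)),
$$
with the $o(1)$ uniform over all admissible $\G$, which is the claimed bound.

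The only real subtlety — and the step I expect to be the main obstacle — is the constant-chasing in the first step for SMALL vertices: in the large-$m$ proof SMALL vertices contributed nothing to the deletable set (by {\bf D1}), whereas here their deleted degree is bounded against $\dO$ rather than $\djeden$, so one must verify that the two sources of deletions ($0.1b_1\dO$ from $B^c$ and the $C^c$-driven term passed through {\bf P5}) together stay below $0.5b_1\dO$, using that at a SMALL vertex $\max\{np,1\}$ could be large but the relevant feature count $W'(v)\le 4\dO$ is what {\bf P5} is applied to, and that $\dO\le mp$ with $mp=(1+o(1))\dO$. This is routine but needs care; everything else is a direct transcription of Lemma~\ref{LemDeletable}.
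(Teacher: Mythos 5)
There is a genuine gap, and it sits exactly at the point you flagged as ``the main obstacle'': the verification of {\bf D1}$_*$ at SMALL vertices. Your event $C^c$ bounds the number of deleted $\B_q$-edges from $v$ into $\W'(v)$ by $0.1\tfrac{\ln\ln n}{\ln n}b_1\dO$, and passing this through {\bf P5} gives at most $0.4b_1\dO\max\{np,1\}$ deleted graph edges of the first kind at $v$. In the regime $m<n^{1-\eps}$ one has $np\ge(1+o(1))n^{\eps}\ln n\to\infty$ by \eqref{RownanienpMaleM}, so this bound equals $0.4b_1\dO\cdot np=(1+o(1))0.4b_1\djeden$, which is enormously larger than the budget $0.5b_1\dO$ that {\bf D1}$_*$ allows. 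In your final accounting you silently replace $0.4b_1\dO\max\{np,1\}$ by $0.4b_1\dO$, which is exactly where the argument breaks. The failure is not just in the bookkeeping: a single deleted bipartite edge $\{v,w\}$ at a SMALL vertex $v$ can destroy up to $V(w)-1$ graph edges at $v$, and {\bf P5} only caps $V(w)$ by $\tfrac{\ln n}{\ln\ln n}np\gg\dO$, so no event of the form ``at most $r\ge 1$ bipartite edges at $v$ are deleted'' can yield {\bf D1}$_*$.

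The paper's proof avoids this by changing the event $A$, not just $B$ and $C$: it defines $A_*$ to require that \emph{no} bipartite edge incident to a vertex of $\SMALLv_*$ is deleted in $\B_q$ (in addition to preserving $\Ham(\G)$). This is affordable because, by {\bf P1}$_*$, either $\SMALLv_*=\emptyset$ or there are at most $n^{\eps}$ SMALL vertices each with $O(\ln n)$ features, so only $o(n)$ bipartite edges must survive and $(1-q)^{o(n)}=1-o(1)$. Then {\bf D1}$_*$ follows from $A_*$ together with $B_*^c$ (taken, as in your proposal, over all $v\in\V$): with $v$'s own bipartite edges intact, an edge $\{v,v'\}$ at a SMALL $v$ can only be lost through deletions on the $v'$ side, and $B_*^c$ caps the number of such $v'$ by $0.1b_1\dO$. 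Your treatment of {\bf D2}$_*$, {\bf D3}, and all of the probability estimates is correct and matches the paper; the missing ingredient is the strengthened event $A_*$.
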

\begin{proof}
	We redefine events $A$ and $B$. For any $v\in \V$ and $w\in\W$ let 
	$A^*_{vw}$ be event that $\{v,w\}$ was not deleted in $\B_q$.
	Moreover
	\begin{align*}
	A_*&=\bigcap_{v\in \SMALLv_*, w\in\W(v) } A^*_{vw}
	\cap 
	\bigcap_{\{v,v'\}\in \Ham(\G)} A_{vv'} ,\\
	B_*&=
	\bigcup_{v\in\V}\ 
	\bigcup_{\Set\subseteq \N(v), |\Set|=0.1b_1\dO}\ 
	\bigcap_{v'\in \Set}B_{vv'},\\
	\\
	C&=\bigcup_{v\in\LARGEv_*}\ 
	\bigcup_{\R\subseteq \W'(v), |\R|= 0.1\frac{\ln\ln n}{\ln n}b_1\dO}\ 
	\bigcap_{w\in \R}C_{vw}.\end{align*}

	Recall that for  $v\in\SMALLv_*$, if $mp\le 2\ln n$ then we have $W(v)\le 6\cdot 10^{-3} mp\le 12\cdot 10^{-3}\ln n$. Otherwise by {\bf P1}$_*$ $\SMALLv_*=\emptyset$. Therefore by {\bf P1}$_*$ and \eqref{RownanieHG}  
	\begin{align*}
	\PraG{A_*}\ge (1-q)^{n^{\eps}\cdot 12\cdot 10^{-3}\ln n}(1-q)^{2|\Ham(\G)|}
	\ge (1+o(1))\exp(-2\lambda(2T+2)).
	\end{align*} 
	Moreover, similar bounds as for $\PraG{B|A}$ and $\PraG{C|A}$ in the case $m\ge n^{1-\eps}$ give 
	$$
	\PraG{B_*|A_*}=o(1)\quad\text{and}\quad\PraG{C|A_*}=o(1).
	$$ 
	Therefore
	$$
	\PraG{A_*\cap(B_*\cup C)^c}\ge (1+o(1))\exp(-2\lambda(2T+2)).
	$$
	We are left with showing that $A_*\cap(B_*\cup C)^c$ implies that $X_q$ is deletable. The same arguments as before give that $A_*$ implies {\bf D3} and $(B_*\cup C)^c$ implies {\bf D2}$_*$.  Moreover $A_*$ with $B_*$ imply {\bf D1}$_*$.

In the remaining part of the proof we only replace
$D_1$ and $D_2$ by
\begin{itemize}
	\item[] $D_{1*}$ -- $\mathcal{G}\left(n,m,p\right)$ has properties  {\bf P1}$_{*}$, {\bf P3}$_{*}$, {\bf P4}, {\bf P5} with $b_1=1/1000$;
	\item[] $D_{2*}$ -- $\delta(\mathcal{G}\left(n,m,p\right))\ge np/2$ ($\mathcal{G}\left(n,m,p\right)$ has property {\bf P0}$_*$); 
\end{itemize}
and at the end, using \eqref{RownanieDeltaDuze}, we get  
\begin{align*}
&\Pra{\{\delta(\mathcal{G}\left(n,m,p\right))\ge 2\}\cap\{\text{HAM finishes unsuccessfully on }\mathcal{G}\left(n,m,p\right)\}}\\
&\le \Pra{\{\delta(\mathcal{G}\left(n,m,p\right))\ge np/2\}\cap\{\text{HAM finishes unsuccessfully on }\mathcal{G}\left(n,m,p\right)\}}\\
&\quad+\Pra{\{\delta(\mathcal{G}\left(n,m,p\right))\ge 2\}\cap \{\delta(\mathcal{G}\left(n,m,p\right))\ge np/2\}^c}\\
&=\Pra{D_{2*}\cap D_3}+o(1)\\
&\le \Pra{D_{1*}\cap D_{2*}\cap D_3} + \Pra{D_{1*}^c}+o(1)=o(1).
\end{align*}

\end{proof}

\section*{Appendix A}

First consider the case $mp^2\le 1$.
In this case by Lemma~\ref{LemProperties} (property {\bf P4}) {\whp} each vertex in $\mathcal{G}\left(n,m,p\right)$ has degree at most $12d$. Therefore in stage $k$, $1\le k\le n$, each of the considered paths has at most $24d$ rotations (at most $12d$ rotations for each of its ends). As we consider only those paths which result from at most $2T+1$ rotations, the total number of rotations made during the execution of stage $k$ is at most $(12d)^{2T+2}=O(n^{2+\eps})$, for any constant $\eps>0$. Summing over all stages we get that {\whp} the algorithm makes $O(n^{3+\eps})$ rotations. Considering the time needed to execute a rotation we get that {\whp} HAM terminates in $O(n^{4+\eps})$ time on $\mathcal{G}\left(n,m,p\right)$. In the case $mp^2\ge 1$ we have $T \le 2$ therefore in each stage the number of rotations is at most  $O(n^2)$. Thus the algorithm terminates in $O(n^4)$ rounds.       

\section*{Appendix B}

\begin{proof}[ of Theorem~\ref{LemmaDegree}]
	
	The following result is a special case of 
	Lemma 5.1 from \cite{RIGcoupling2}.

	\begin{lem}\label{LemmaDegree1}
		If $\ln^2n=o(m)$ and 
		\begin{equation*}
		p(1-(1-p)^{n-1})=
		\frac{\ln n + c_n}{m},
		\end{equation*}
		then
		$$
		\lim_{n\to\infty}\Pra{\delta(\mathcal{G}\left(n,m,p\right))\ge 1}=
		\begin{cases}
		0&\text{ for }c_n\to -\infty;\\
		e^{-e^{-c}}&\text{ for }c_n\to c\in (-\infty,\infty);\\
		1&\text{ for }c_n\to \infty.
		\end{cases}
		$$
	\end{lem}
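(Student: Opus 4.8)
The plan is to prove the lemma by the method of moments applied to the number $X$ of isolated vertices of $\mathcal{G}\left(n,m,p\right)$, using that $\{\delta(\mathcal{G}\left(n,m,p\right))\ge 1\}=\{X=0\}$. Throughout write $\dO=mp(1-(1-p)^{n-1})$, so the hypothesis reads $\dO=\ln n+c_n$. First I would compute the probability that a fixed vertex $v$ is isolated by a feature-by-feature analysis. Call a feature $w$ \emph{harmless for $v$} if either $v$ did not choose $w$, or $v$ chose $w$ but no other vertex did; this event has probability $(1-p)+p(1-p)^{n-1}=1-\dO/m$, and since the $m$ features are chosen independently and $v$ is isolated exactly when every feature is harmless,
\begin{displaymath}
\Pra{v\text{ is isolated}}=\left(1-\frac{\dO}{m}\right)^m.
\end{displaymath}

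Second, I would extend this to the joint probability that $r$ fixed distinct vertices $v_1,\dots,v_r$ are all isolated, again analysing each feature $w$ separately. The choices of $w$ are consistent with all $r$ vertices being isolated iff at most one of $v_1,\dots,v_r$ chose $w$, and if exactly one did then no vertex outside $\{v_1,\dots,v_r\}$ chose $w$; otherwise either two of our vertices are adjacent or one of them has an outside neighbour. This per-feature event has probability $q_r:=(1-p)^r+rp(1-p)^{n-1}$, so by independence across features $\Pra{v_1,\dots,v_r\text{ all isolated}}=q_r^{\,m}$, and hence the factorial moment is $\E[(X)_r]=(n)_r\,q_r^{\,m}$, where $(n)_r=n(n-1)\cdots(n-r+1)$. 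Expanding $(1-p)^r=1-rp+O(r^2p^2)$ gives $q_r=1-r\dO/m+O(r^2p^2)$, whence
\begin{displaymath}
m\ln q_r=-r(\ln n+c_n)-\frac{r^2\dO^2}{2m}+O(r^2mp^2)=-r(\ln n+c_n)+o(1),
\end{displaymath}
the error terms vanishing because in the regimes where the moment method is applied ($c_n\to c$ or $c_n\to-\infty$) one has $\dO\le(1+o(1))\ln n$, hence $\dO^2=o(m)$, and because $mp^2=o(1)$ as argued below. Therefore $q_r^{\,m}=n^{-r}e^{-rc_n}(1+o(1))$, and since $(n)_r=n^r(1+o(1))$ for fixed $r$ we obtain $\E[(X)_r]=e^{-rc_n}(1+o(1))$.

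Finally I would read off the three cases. If $c_n\to c$, then $\E[(X)_r]\to(e^{-c})^r$ for every fixed $r$, so by the standard factorial-moment criterion for Poisson convergence (see e.g.\ \cite{KsiazkaJLR}) $X$ converges in distribution to $\Po{e^{-c}}$, giving $\Pra{X=0}\to e^{-e^{-c}}$. If $c_n\to\infty$, no error control is even needed: from $1-x\le e^{-x}$ we get $\E X=n(1-\dO/m)^m\le ne^{-\dO}=e^{-c_n}\to 0$, so Markov's inequality yields $\Pra{X\ge 1}\to 0$. If $c_n\to-\infty$, then $\E X\to\infty$ while $\E[X^2]=\E[(X)_2]+\E X=e^{-2c_n}(1+o(1))$, so $\E[X^2]/(\E X)^2\to 1$ and Chebyshev's inequality gives $\Pra{X=0}\le\mathrm{Var}(X)/(\E X)^2\to 0$. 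I expect the main obstacle to be the uniform control of the error terms in the factorial-moment estimate, and in particular the verification that $mp^2=o(1)$ in every parameter regime. This I would handle by splitting according to the size of $np$: using the elementary bound $1-(1-p)^{n-1}\ge(1-e^{-1})\min\{1,(n-1)p\}$ quoted earlier, one gets $p/(1-(1-p)^{n-1})\le C\max\{p,1/(n-1)\}$, so that $mp^2\le C(\ln n+c_n)\max\{p,1/(n-1)\}=o(1)$, since $\dO=O(\ln n)$ forces $p\to 0$ in both the $np\gtrsim 1$ and $np\to 0$ regimes. The joint isolation formula $q_r^{\,m}$ is the key structural step, but once it is in hand the remainder is bookkeeping of lower-order terms.
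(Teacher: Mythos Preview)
Your proof is correct. The paper does not actually prove this lemma: it is quoted verbatim as ``a special case of Lemma~5.1 from \cite{RIGcoupling2}'' and then used as a black box inside the proof of Theorem~\ref{LemmaDegree}. So there is nothing to compare at the level of argument for this particular statement.

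That said, your approach meshes perfectly with what the paper does next. In Appendix~B the author computes the factorial moments of the number of degree-$1$ vertices using exactly the same per-feature decomposition you use for isolated vertices: the probability that a feature is ``harmless'' for $k$ designated vertices is $(1-p)^k+kp(1-p)^{n-1}$, and independence across features gives the $m$-th power. Your computation is the $r$-vertex, degree-$0$ version of that same identity, and your error bookkeeping (namely $\dO^2/m=o(1)$ from the hypothesis $\ln^2 n=o(m)$, and $mp^2=o(1)$ from the case split on $np$) is precisely the kind of control the paper invokes elsewhere. The only cosmetic point is that in the case $c_n\to-\infty$ you might note explicitly that the expansion $q_r^{\,m}=n^{-r}e^{-rc_n}(1+o(1))$ remains valid because $\dO=\ln n+c_n\le\ln n$ keeps all error terms bounded exactly as in the $c_n\to c$ case; you say this, but it is worth flagging since $c_n$ can in principle be as negative as $-\ln n+\omega(1)$.
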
	
	Now we will find the probability of existence of a vertex of degree 1.  Recall that 
	$$p(1-(1-p)^{n-1})=\frac{\ln n + \ln a_n + c_n}{m},$$
	and
	$$
	a_n=
	\begin{cases}
	1&\text{for } m < (1-\eps)  n\ln n/\ln\ln n;\\
	\frac{np\ln n}{e^{np}-1}&\text{for } m > (1+\eps) n\ln n/\ln \ln n. 
	\end{cases}
	$$

	First consider the case  $m < (1-\eps)  n\ln n/\ln\ln n$. In this case $p(1-(1-p)^{n-1})=\frac{\ln n + c_n}{m}$ as $a_n=1$. We will show that if $c_n\to c\in (-\infty,\infty]$ then {\whp} there is no vertex of degree $1$ in $\mathcal{G}\left(n,m,p\right)$.  Note that in this case
	$$np \ge np(1-(1-p)^{n-1})=\frac{n(\ln n + c_n)}{m}>(1+o(1))\frac{\ln\ln n}{1-\eps}.$$ 
	Therefore, for large $n$, $np > (1+o(1))\ln\ln n/(1-\eps)> (1+\eps)\ln\ln n$, $\ln np = o(np)$, and $mp = (1+o(1)) \ln n$.

	Note that $v\in\V$ has degree $1$ in an intersection graph $\G$ associated with a bipartite graph $\B$  if there exist a feature $w$ and  vertex $v'$ such that $\V(w)=\{v,v'\}$ and for any other feature $w'\in\W\setminus\{w\}$ pair $\{v,w'\}$ is not an edge in $\B$, or $w'$ is adjacent only to $v$ in $\B$, or $\V(w')=\{v,v'\}$. Therefore, 
	for any vertex $v\in \V$ and $c_n\to c \in (-\infty,\infty]$,
	\begin{align*}
	\Pra{\deg(v)=1}
	&
	\le 
	\sum_{w\in\W}\sum_{v'\in \V}
	p^2(1-p)^{n-2}((1-p)+p(1-p)^{n-1}+p^2)^{(m-1)}\\
	&\le mp  \cdot   np \cdot \exp(-np+p) \\
	&\quad\cdot\exp\left(
	- mp (1-(1-p)^{n-1}) + O(mp^2+p)
	\right)\\
	&\le \exp(\ln mp + \ln np - np - \ln n - c_n + o(1)) \\
	& \le \exp(\ln ((1+o(1))\ln n)   - (1+o(1)) np - \ln n - c_n + o(1)) \\
	&\le \exp(\ln\ln n - (1+\eps)\ln \ln n -\ln n - c_n + o(1))\\
	&
	= o(n^{-1})
	\end{align*}
	Thus {\whp} there is no vertex of degree $1$ in $\mathcal{G}\left(n,m,p\right)$. This combined with Lemma~\ref{LemmaDegree1} completes the proof in this case.
	
	Now assume that $m>(1+\eps)n\ln n/\ln \ln n$. We will use the method of moments to get a Poisson approximation of the number of vertices of degree 1 (see for example Corollary 6.8 in \cite{KsiazkaJLR}). In this case $np<(mp\ln\ln n)/((1+\eps)\ln n)$ and, as a consequence, $a_n=(np\ln n)/(e^{np}-1)\to\infty$ (note that $x/(e^x-1)$ is decreasing for $x\in (0,\infty)$). Recall that $mp^2=o(1)$, {\it i.e.} also $np^2=o(1)$.
	Let $k$ be a constant integer and $\{v_1,\ldots,v_k\}\subseteq \V$. Note that $(m)_k$ is the number of choices of $w_1,\ldots,w_k$, which might be chosen by vertices $v_1,\ldots,v_k$, resp., $(n-k)p^2(1-p)^{n-2}$ is the probability that $w_i$, $1\le i\le k$, was chosen only by $v_i$ and one other vertex from $\V\setminus\{v_1,\ldots,v_k\}$, and $((1-p)^{k}+kp(1-p)^{n-1})^{(m-k)}$ is the probability that each feature from $\W\setminus \{w_1,\ldots,w_k\}$ either was not chosen by any vertex from $\{v_1,\ldots,v_k\}$ or was chosen by exactly one vertex from $\{v_1,\ldots,v_k\}$ and no other vertex. Therefore 
	\begin{align*}
	&\Pra{\deg(v_1)=\ldots=\deg(v_k)=1}\\
	&\ge (m)_k\left((n-k)p^2(1-p)^{n-2}\right)^k((1-p)^{k}+kp(1-p)^{n-1})^{(m-k)}\\
	&\ge (1+o(1)) (mp(1-(1-p)^{n-1}))^k\left(\frac{np(1-p)^{n-1}}{(1-(1-p)^{n-1})}\right)^k\\
	&\quad\cdot(1-kp+kp(1-p)^{n-1})^{(m-k)}
	\\
	&= (1+o(1)) \left(\frac{\ln n \cdot np}{(1-p)^{-(n-1)}-1}\right)^k\\ &\quad\cdot\exp(-kmp(1-(1-p)^{n-1})+O(mp^2+p))\\
	&= (1+o(1)) \left(\frac{\ln n \cdot np}{e^{np}-1}\right)^k\exp(-k(\ln n + \ln a_n + c_n))\\
	&= (1+o(1))  \left(\frac{e^{-c_n}}{n}\right)^{k}. 
	\end{align*}
	
	Now we will find an upper bound on $\Pra{\deg(v_1)=\ldots=\deg(v_k)=1}$. Below we set $i$, $0\le i\le k/2$, to be the number of edges induced by  $\{v_1,\ldots,v_k\}$ in $\mathcal{G}\left(n,m,p\right)$. We note that the probability that for a given feature $w$ set  $\V(w)$ is the same as some edge incident to a vertex from $\{v_1,\ldots,v_k\}$ is upper bounded by $kp^2$. Therefore
	\begin{align*}
	&\Pra{\deg(v_1)=\ldots=\deg(v_k)=1}\\
	&\le
	\sum_{i=0}^{k/2} (m)_{k-i}(k)_{2i}(np^2(1-p)^{n-2})^{(k-i)}\\
	&\quad\cdot ((1-p)^{k}+kp(1-p)^{n-1}+kp^2)^{(m-k)}\\
	&\le (1-kp+kp(1-p)^{n-1}+(kp)^2+kp^2)^{(m-k)}\\
	&\quad\cdot\sum_{i=0}^{k/2}k^{2i}(mp(1-(1-p)^{n-1}))^{k-i}
	\left(\frac{np(1-p)^{n-1}}{(1-(1-p)^{n-1})}\right)^{k-i}
	\\
	&=(1+o(1))\exp\left(-k(\ln n + \ln a_n + c_n)\right)
	\sum_{i=0}^{k/2} k^{2i}a_n^{k-2i}
	\\
	&= (1+o(1)) \left(\frac{e^{-c_n}}{n}\right)^k. 
	\end{align*}
	
	Therefore by the method of moments  the number of vertices of degree $1$ in $\mathcal{G}\left(n,m,p\right)$ tends to a random variable with the Poisson distribution $\Po{e^{-c}}$, where $c=\lim_{n\to\infty}c_n$.
	This combined with Lemma~\ref{LemmaDegree1} finishes the proof in the second case.
	
\end{proof}

\section*{Appendix C}

\begin{proof}[ of Remark~\ref{RemDelta2lnn}]
	In fact we will show that {\whp}
	\begin{equation}\label{RownanieVR}
	\forall_{\R\subseteq \W,|\R|\le p^{-1}}
	V(\R)\ge \frac{np|\R|}{2}+1.
	\end{equation}
	Note that \eqref{RownanieVR} implies that {\whp} each vertex is either isolated or of degree at least $np/2$. Therefore \eqref{RownanieDeltaDuze} follows by \eqref{RownanieVR}.
	
	We will now show \eqref{RownanieVR}. Recall that $np\ge (1+o(1))n^{\eps}\ln n$. 
	For $\R\subseteq \W$ with $|\R|=r\le p^{-1}$, $V(\R)$ has the binomial distribution $\Bin{n}{1-(1-p)^{r}}$ with 
	$\E V(\R)\ge 3npr/5$ (as for large $n$ we have $1-(1-p)^{r}\ge 3pr/5$).  
	Therefore
	\begin{align*}
	\Pra{\exists_{\R\subseteq \W, |\R|\le p^{-1}} V(\R)\le (npr/2) + 1}
	&\le 
	\sum_{r=1}^{p^{-1}}\binom{m}{r}\exp\left(-\psi((1+o(1))5/6)3npr/5\right)\\
	&\le 
	\sum_{r=1}^{p^{-1}}\exp\left(-r\left(0.008np-\ln m\right)\right)=o(1).\\
	\end{align*}	
\end{proof}

\begin{proof}[ of Lemma~\ref{LemProperties2}]

	\noindent{\bf P1}$_*$ This proof is practically the same as the proof of {\bf P1}. 
	
	\noindent{\bf P3}$_*$
	As in the proof of {\bf P3} in the case $np>40$ 
	\begin{align*}
	&\Pra{\exists_{\Set\subseteq \LARGEv_*,|\Set|\le n/\djeden}
		W(\Set)\le 3\cdot 10^{-3}\cdot mp|\Set|
	}\\
	&
	\le 
	\sum_{s=1}^{n/\djeden}
	\binom{n}{s}
	\binom{m}{3\cdot 10^{-3}mps}
	\left(
	\frac
	{3\cdot 10^{-3}mps}
	{m}
	\right)^{6\cdot 10^{-3}mps}\\
	&\le
	\sum_{s=1}^{n/\djeden} n^{s}
	\left(
	e\cdot 3\cdot 10^{-3}ps
	\right)^{3\cdot 10^{-3}mps}\\
	&\le
	\sum_{s=1}^{n/\djeden}
	\exp\left(
	s\left(
	\ln n + 3\cdot 10^{-3}(1+o(1))\ln n (\ln ps + O(1))
	\right)
	\right)
	=o(1).
	\end{align*}
	The above stated inequalities together with \eqref{RownanieVR} imply that if $m<n^{1-\eps}$ then {\whp} for all $\Set\subseteq \LARGEv_*$, $|\Set|\le n/d_1$, and $n$ large
	$$
	N(\Set) = V(\W(\Set))-|\Set|\ge \frac{np|\W(\Set)|}{2}-|\Set|\ge \frac{np}{2}\cdot3\cdot 10^{-3}\cdot mp|\Set|-|\Set|>10^{-3}\djeden|\Set|. 
	$$
	
	\noindent{\bf P4} and {\bf P5} The proofs of  {\bf P4} and {\bf P5} presented in the proof of Lemma~\ref{LemProperties} are also valid in the case $m<n^{1-\eps}$.
	
\end{proof}

\section*{Appendix D}

\begin{proof}[ of Lemma~\ref{LemENDG}]

	Set $\G=\G_n$ and $\G'=\G'_n$. 
	As $\Ham(\G)\subseteq E(\G')$, the execution of HAM on $\G'$ will be the same as on $\G$, {\it i.e.} HAM will start stage $k$ in $\G'$ with $P_k$ (with endpoints $u_0$ and $u_1$) and terminate unsuccessfully in this stage.
	
	Let 
	\begin{equation*}
	\Set_t=\{
	v: v\in \V' \text{ and at stage $k$ there exists a path $Q_s$} \text{ with endpoints $u_0$, $v$ such that $\bar{\delta}(Q_s)=t$}
	\}
	\end{equation*}
	
	First we prove that $\Set_1\neq\emptyset$. $P_k=Q_1$. If $u_1\notin \V'$ then $u_1$ has at least $\delta(\G')$ neighbours on $Q_1$ and at least $\delta(\G')-1$ of them are not connected with $u_1$ by an edge of $Q_1$. Therefore on $Q_1$ there are at  least $\delta(\G')-1\ge 1$ new ends obtained by one rotation from $Q_1$. If $\delta_0=1$ then all of these ends are in $\V'$ (otherwise there would be a vertex with more than $\delta_0=1$ vertices at distance at most $2$ in $\V\setminus\V'$). This implies $\Set_1\neq \emptyset$ in the case $\delta_0=1$.  If $\delta_0 > 1$ ({\it i.e.} $\delta(\G')\ge 1/b_4\ge 3$) then, by (ii), at least 
	$$(1-b_4)\delta(\G')-1\ge \frac{2}{3}\cdot 3 -1 = 1$$ of the ends of the paths obtained by one rotation from $Q_1$ are in $\V'$, {\it i.e.} $\Set_1\neq\emptyset$.
	If $u_1\in \V'$ then $u_1$ has at least $b_3d$ neighbours and at least $b_3d-1$ new ends obtained by a rotation from $Q_1$. Then at least $b_3d-b_4d-1\ge (b_3-b_4)d-1\ge 1$ of these ends are in $\V'$, {\it i.e.} $\Set_1\neq\emptyset$.
	
	Now assume that $1\le |\Set_t|\le n/d$ for some $t$, $1\le t\le T$.
	
	For each $v\in \Set_t$ set a path $Q_{s(v)}$ with endpoints $u_0$ and $v$ such that $\bar{\delta}(Q_{s(v)})=t$. 
	
	Now look at the pairs $(v,u)$, where $u$ is a neighbour of $v$ in $\G'$, {\it i.e.} $u\in \N_{\G'}(v)$. For $\{v,u\}$ which is not an edge of $Q_{s(v)}$, denote by $x(v,u)$  the endpoint of $ROTATE(Q_{s(v)},\{v,u\})$ other than $u_0$. If $x(v,u)\in \V'$ then $x(v,u)\in \Set_{t+1}$.
	
	For any $u\in \N_{\G'}(\Set_t)$ there are at most $2$ vertices $x\in \Set_{t+1}$ such that for some $v\in \Set_t$ we have $x=x(v,u)$  and $\{x,u\}\in E(P_k)$.  
	
	Moreover,
	for any $v$ there is only one 
	$u\in \N_{\G'}(v)$ such that $\{v,u\}\in E(Q_{s(v)})$, at most $b_4d$ vertices $u\in \N_{\G'}(v)$ such that $x(v,u)\notin\V'$ and at most $t$ (where $t=\bar{\delta}(Q_{s(v)})$) vertices $u\in \N_{\G'}(v)$ such that $\{x,u\}\notin E(P_k)$ (since there are at most $t$ edges on $Q_{s(v)}$ which are not in $P_k$).

	Therefore
	
	\begin{align*}
	|\Set_{t+1}|
	&=
	|\{
	x(v,u): v\in \Set_t, u\in \N_{\G'}(v)\text{ and }x(v,u)\in\V'
	\}|\\
	&\ge
	|\{
	x(v,u): v\in \Set_t, u\in \N_{\G'}(v),x(v,u)\in\V',\{u,v\}\notin E(Q_{s(v)}),\{x(v,u),u\}\in E(P_k)
	\}|\\
	&\ge
	|
	\{
	u\in \N_{\G'}(\Set_t):\exists_{v\in \Set_t}x(v,u)\in\V',\{u,v\}\notin E(Q_{s(v)}),\{x(v,u),u\}\in E(P_k)
	\}
	|/2
	\\
	&\ge 
	(N_{\G'}(\Set_t)-(t+1+b_4d)|\Set_t|)/2
	\\
	&\ge 
	((b_3-b_4)d-T-1)|\Set_t|/2
	\\
	&=(1+o(1))(b_3-b_4)d|\Set_t|/2\ge b_5 d |\Set_t|
	\end{align*}  
	for any constant $b_5<(b_3-b_4)/2$.
	
	$\Set_1\neq\emptyset$ and $|\Set_{t+1}|\ge b_5 d |\Set_t|$, therefore for some $\tau\le T-1$ we have $|\Set_{\tau}|\ge n/d$. Applying the same argument as in above calculations for  any $\Set'\subseteq \Set_{\tau}$ such that $|\Set'|=n/d$ we have that $\Set_{\tau+1}\ge b_5d|\Set'|\ge b_5n$. To prove the second part of the lemma, for any $x\in\END{\G'}$, it remains to apply the same arguments replacing $u_0$ by $x$ and $P_k$ by the path $Q_{\sigma}$  with one endpoint $x$ and $\bar{\delta}(Q_{\sigma})\le 2T$.
\end{proof}

\bibliographystyle{abbrvnat}
\bibliography{HAM_short}

\begin{thebibliography}{16}
\providecommand{\natexlab}[1]{#1}
\providecommand{\url}[1]{\texttt{#1}}
\expandafter\ifx\csname urlstyle\endcsname\relax
  \providecommand{\doi}[1]{doi: #1}\else
  \providecommand{\doi}{doi: \begingroup \urlstyle{rm}\Url}\fi

\bibitem[Bloznelis(2013)]{RIGClustering2}
M.~Bloznelis.
\newblock Degree and clustering coefficient in sparse random intersection
  graphs.
\newblock \emph{The Annals of Applied Probability}, 23\penalty0 (3):\penalty0
  1254--1289, 2013.

\bibitem[Bloznelis and Radavi{\v{c}}ius(2011)]{UniformHamilton1}
M.~Bloznelis and I.~Radavi{\v{c}}ius.
\newblock A note on hamiltonicity of uniform random intersection graphs.
\newblock \emph{Lithuanian Mathematical Journal}, 51\penalty0 (2):\penalty0
  155, 2011.

\bibitem[Bloznelis et~al.(2009)Bloznelis, Jaworski, and Rybarczyk]{WSNphase2}
M.~Bloznelis, J.~Jaworski, and K.~Rybarczyk.
\newblock Component evolution in a secure wireless sensor network.
\newblock \emph{Networks}, 53\penalty0 (1):\penalty0 19--26, 2009.

\bibitem[Bloznelis et~al.(2015{\natexlab{a}})Bloznelis, Godehardt, Jaworski,
  Kurauskas, and Rybarczyk]{RIGsurvey1}
M.~Bloznelis, E.~Godehardt, J.~Jaworski, V.~Kurauskas, and K.~Rybarczyk.
\newblock Recent progress in complex network analysis: Models of random
  intersection graphs.
\newblock In B.~Lausen, S.~Krolak-Schwerdt, and M.~B{\"o}hmer, editors,
  \emph{Data Science, Learning by Latent Structures, and Knowledge Discovery},
  Studies in Classification, Data Analysis, and Knowledge Organization, pages
  69--78. Springer Berlin Heidelberg, 2015{\natexlab{a}}.

\bibitem[Bloznelis et~al.(2015{\natexlab{b}})Bloznelis, Godehardt, Jaworski,
  Kurauskas, and Rybarczyk]{RIGsurvey2}
M.~Bloznelis, E.~Godehardt, J.~Jaworski, V.~Kurauskas, and K.~Rybarczyk.
\newblock Recent progress in complex network analysis: Properties of random
  intersection graphs.
\newblock In B.~Lausen, S.~Krolak-Schwerdt, and M.~B{\"o}hmer, editors,
  \emph{Data Science, Learning by Latent Structures, and Knowledge Discovery},
  Studies in Classification, Data Analysis, and Knowledge Organization, pages
  79--88. Springer Berlin Heidelberg, 2015{\natexlab{b}}.

\bibitem[Bollob{\'a}s et~al.(1987)Bollob{\'a}s, Fenner, and
  Frieze]{HamiltonAlg1}
B.~Bollob{\'a}s, T.~I. Fenner, and A.~M. Frieze.
\newblock An algorithm for finding hamilton paths and cycles in random graphs.
\newblock \emph{Combinatorica}, 7\penalty0 (4):\penalty0 327--341, 1987.

\bibitem[Brittom et~al.(2008)Brittom, Deijfen, Lager\r{a}s, and
  Lindholm]{GpEpidemics}
T.~Brittom, M.~Deijfen, A.~N. Lager\r{a}s, and M.~Lindholm.
\newblock Epidemics on random graphs with tunable clustering.
\newblock \emph{Journal of Applied Probability}, 45\penalty0 (3):\penalty0
  743--756, 2008.

\bibitem[Deijfen and Kets(2009)]{RIGTunableDegree}
M.~Deijfen and W.~Kets.
\newblock Random intersection graphs with tunable degree distribution and
  clustering.
\newblock \emph{Probability in the Engineering and Information Sciences},
  23:\penalty0 661--674, 2009.

\bibitem[Efthymiou and Spirakis(2005)]{SpirakisHamiltonCycles}
C.~Efthymiou and P.~G. Spirakis.
\newblock On the existence of {H}amiltonian cycles in random intersection
  graphs.
\newblock In \emph{Automata, Languages and Programming 32nd International
  Colloquium, ICALP 2005, Lisbon, Portugal, July 11-15, 2005. Proceedings},
  pages 690--701, 2005.

\bibitem[Godehardt and Jaworski(2003)]{RIGGodehardt1}
E.~Godehardt and J.~Jaworski.
\newblock Two models of random intersection graphs for classification.
\newblock In O.~Opitz and M.~Schwaiger, editors, \emph{Studies in
  Classifcation, Data Analysis and Knowledge Organization}, volume~22, pages
  67--81. Springer, 2003.

\bibitem[Janson et~al.(2001)Janson, {\L}uczak, and Ruci\'{n}ski]{KsiazkaJLR}
S.~Janson, T.~{\L}uczak, and A.~Ruci\'{n}ski.
\newblock \emph{Random Graphs}.
\newblock Wiley, 2001.

\bibitem[Karo\'{n}ski et~al.(1999)Karo\'{n}ski, Scheinerman, and
  Singer-Cohen]{GpSubgraph}
M.~Karo\'{n}ski, E.~R. Scheinerman, and K.~Singer-Cohen.
\newblock On random intersection graphs: The subgraph problem.
\newblock \emph{Combinatorics, Probability and Computing}, 8:\penalty0
  131--159, 1999.

\bibitem[P\'{o}sa(1976)]{ERHam1}
L.~P\'{o}sa.
\newblock Hamiltonian circuits in random graphs.
\newblock \emph{Discrete Mathematics}, 14\penalty0 (4):\penalty0 359 -- 364,
  1976.

\bibitem[Raptopoulos and Spirakis(2005)]{SpirakisHamiltonAlgorytm}
C.~Raptopoulos and P.~Spirakis.
\newblock Simple and efficient greedy algorithms for hamilton cycles in random
  intersection graphs.
\newblock In \emph{Algorithms and Computation 16th International Symposium,
  ISAAC 2005, Sanya, Hainan, China, December 19-21, 2005. Proceedings}, pages
  493--504, 2005.

\bibitem[Rybarczyk(2011)]{GpCoupling}
K.~Rybarczyk.
\newblock Sharp threshold functions for random intersection graphs via a
  coupling method.
\newblock \emph{The Electronic Journal of Combinatorics}, 18\penalty0
  (1):\penalty0 P36, 2011.

\bibitem[Rybarczyk(2017)]{RIGcoupling2}
K.~Rybarczyk.
\newblock The coupling method for inhomogeneous random intersection graphs.
\newblock \emph{The Electronic Journal of Combinatorics}, 24\penalty0
  (2):\penalty0 P2.10, 2017.

\end{thebibliography}

\end{document}